\documentclass[12pt]{amsart}

\usepackage[english]{babel}
\usepackage{amsmath}
\usepackage{amsfonts}
\usepackage{amssymb}
\usepackage{amscd}
\usepackage[shortlabels]{enumitem}
\usepackage{geometry}
\usepackage[colorlinks=true,citecolor=blue,linkcolor=blue,urlcolor=blue]{hyperref}
\usepackage[capitalise,noabbrev]{cleveref}
\newtheorem{theorem}{Theorem}[section]
\theoremstyle{plain}
\newtheorem{corollary}[theorem]{Corollary}
\newtheorem{lemma}[theorem]{Lemma}
\newtheorem{proposition}[theorem]{Proposition}
\theoremstyle{definition}

\newtheorem{remark}{Remark}

\numberwithin{equation}{section}
\newcommand{\Sh}{\operatorname{Sh}}
\newcommand{\germ}{\operatorname{germ}_{\infty}}

\begin{document}

\title[Kwasik--Schultz manifolds are $\mathcal{Z}$-compactifiable]{Kwasik--Schultz manifolds are $\mathcal{Z}$-compactifiable}
\author{Shijie Gu}
\address{Department of Mathematics\\
Northeastern University, Shenyang, Liaoning, China, 110004}
\email{shijiegutop@gmail.com}
\date{August 25th, 2026}
\keywords{$\mathcal{Z}$-compactification, Kwasik--Schultz manifold,
weak collar, cell-like map, non-ANR homology manifold, pseudo-collar}
\subjclass[2020]{Primary 57N15; Secondary 57N65, 57Q12, 54C56}

\begin{abstract}
Kwasik and Schultz constructed two-ended open $4$-manifolds which
satisfy the usual finiteness and stability conditions at infinity but
do not admit arbitrarily small $1$-neighborhoods.  In particular, neither end is collarable, so the manifolds are not completable.
We show that the open manifolds associated to their non-desuspendable
$C_2$-actions nevertheless admit finite-dimensional compact ANR
$\mathcal{Z}$-compactifications.
Consequently, there exists a $\mathcal{Z}$-compactifiable open
$4$-manifold which is not pseudo-collarable.  This answers a question of
Guilbault and Tinsley.
\end{abstract}

\maketitle

\section{Introduction}

A closed subset $Z$ of an absolute neighborhood retract (ANR) $X$ is a
$\mathcal{Z}$-\emph{set} if, for every open set $U\subseteq X$, the
inclusion $U-Z\hookrightarrow U$ is a homotopy equivalence.  A
compactification $\widehat X=X\sqcup Z$ is called a
$\mathcal{Z}$-\emph{compactification} if $\widehat X$ is a compact ANR and
$Z$ is a $\mathcal{Z}$-set in $\widehat X$.  Here the added set $Z$
need not itself be an ANR.  For compact metric
ANRs, the open-set definition is equivalent to the instantaneous-homotopy
definition \cite[Sec.~2, p.~1266]{AG99}.

An $n$-manifold $M$ is \emph{completable} if there are a compact
$n$-manifold $\widehat M$ and a compact set $C\subset\partial\widehat M$
such that $M\cong\widehat M-C$.  If $M$ is boundaryless, then necessarily
$C=\partial\widehat M$, so completability is equivalent to being the
interior of a compact manifold; in particular its finitely many ends are
collared.  Following the terminology used by Kwasik and Schultz, a
\emph{$1$-neighborhood} of a stable end is a manifold neighborhood $U$ for
which the inclusion $\partial U\hookrightarrow U$ induces an isomorphism
on fundamental groups (and hence identifies this group with the stable end
group in the situation considered here).

Let $V$ be a $4$-dimensional orthogonal representation of $C_2$ whose
action on the unit sphere is free, and write $D(V)$ and $S(V)$ for its
unit disk and unit sphere.  Choose
$\alpha\in L_4^h(C_2;1)$ whose transfer to $L_4^h(1)=8\mathbb Z$ is the
generator $8$.  Kwasik and Schultz construct a compact smooth
$4$-manifold $P^4(V,\alpha)$ from $D(V)$ by attaching equivariant
$2$-handles.  Its boundary
\[
        \Sigma=\Sigma(V,\alpha)=\partial P^4(V,\alpha)
\]
is an integral homology $3$-sphere with a free $C_2$-action, and there is
an equivariant degree-one map $\Sigma\to S(V)$
\cite[proof of Thm.~2.1, p.~448]{KS88}.  Starting from this data, they
construct a non-desuspendable semifree $C_2$-action on $S^4$, which we
denote by $S^4(V,\alpha)$ \cite[Thm.~2.1]{KS88}.  Moreover,
\(
   \mu(\Sigma(V,\alpha))=8 \pmod{16};
\)
see \cite[proof of Thm.~2.1, pp.~448--449]{KS88}.

Put
\(
        Q=\Sigma/C_2,
\)
and let
\(
        \kappa:\pi_1Q\twoheadrightarrow C_2
\)
be the classifying homomorphism of the double cover $\Sigma\to Q$.
Let $p_-$ and $p_+$ denote the two fixed points of the
$C_2$-action on $S^4(V,\alpha)$, and set
\[
    W_\alpha=S^4(V,\alpha)-\{p_-,p_+\}.
\]
Since the action is semifree, $C_2$ acts freely on $W_\alpha$.
We denote the corresponding orbit manifold by
\(
    M_\alpha=W_\alpha/C_2\).
Thus $M_\alpha$ is a two-ended open $4$-manifold. Our main result is
the following.

\begin{theorem}\label{Th: KS Z-compactification}
Each end of $M_\alpha$ has a cofinal closed neighborhood which admits
a finite-dimensional compact ANR $\mathcal{Z}$-compactification.
Consequently, $M_\alpha$ admits a compact ANR
$\mathcal{Z}$-compactification of covering dimension at most four.

For either end, the $\mathcal{Z}$-boundary may be chosen in the form \(Z=Q/C\),
where $Q$ is the quotient of the Kwasik--Schultz homology sphere by its
free involution and $C\subset Q$ is a connected \v{C}ech-acyclic
continuum.  The space $Z$ is a non-ANR integral homology $3$-manifold and
\(\Sh(Z)=\Sh(\mathbb{R}P^3)\).
\end{theorem}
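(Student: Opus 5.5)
Write $S^4(V,\alpha)$ as an equivariant union of the cone neighborhoods of the two fixed points with a middle piece. For definiteness, take the Kwasik--Schultz construction in the form in which the fixed points $p_\pm$ have closed invariant neighborhoods $N_\pm$. Each $N_\pm$ is equivariantly the open cone on a homology sphere carrying the free involution, or at least has frontier $\Sigma$. The point is that $S^4(V,\alpha)$ is a manifold near $p_\pm$ only because the cone on $\Sigma$ is a homology manifold whose point singularity can be resolved. By Freedman, $\Sigma$ bounds a contractible topological $4$-manifold $\Delta$, and $\Sigma\times\mathbb R$ with one end capped off is the model for the end. On the quotient, the end of $M_\alpha$ near $p_\pm$ therefore has a cofinal closed neighborhood
\[
E_\pm=(N_\pm-\{p_\pm\})/C_2 .
\]
Its end is the punctured neighborhood of a point whose link is the non-collared end structure. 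My aim is to realize $E_\pm$ as a complement of a $\mathcal Z$-set in a compact ANR.

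The key step is to find, inside a punctured equivariant neighborhood of $p_\pm$, a cell-like decomposition. Concretely, I expect $N_\pm$ minus a point to be homeomorphic to a nested union that is a weak collar. That is, there are compact submanifolds $A_0\supset A_1\supset\cdots$ with $\bigcap A_i=\{p_\pm\}$, and their quotients by $C_2$ are realized as $Q\times[0,\infty)$ modulo a cell-like map. Equivalently, there should be a proper cell-like map
\[
f\colon E_\pm\to Q\times[0,\infty),
\]
or a map in the opposite direction, whose nondegenerate point preimages accumulate near a fixed continuum $C\times\{\infty\}$. I would try to produce $f$ from the degree-one map $\Sigma\to S(V)$ together with the contractible bounding manifold. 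The lift $\Delta$ is acyclic, so on the quotient the acyclic part at infinity shrinks to a \v{C}ech-acyclic continuum $C\subset Q$. I would then set $\widehat E_\pm=E_\pm\sqcup Q/C$ and topologize it via the mapping-cylinder-like structure. That $\widehat E_\pm$ is a compact ANR follows because it is finite-dimensional and locally contractible: the cone-like structure lets small neighborhoods of points of $Z$ contract through the cell-like fibers, using Kozlowski/Haver-type results on cell-like maps between finite-dimensional spaces. The $\mathcal Z$-set property comes from pushing off along the $[0,\infty)$ coordinate.

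Gluing the two compactified end neighborhoods to the compact middle piece gives the global compactification. Its dimension is at most $4$, since $\dim Q/C\le 3$ for a cell-like image of a $3$-manifold that remains finite dimensional. For the properties of $Z=Q/C$, I would argue as follows. The quotient map $Q\to Q/C$ is cell-like, so $\Sh(Z)=\Sh(Q)$ and $Z$ is a \v{C}ech homology $3$-manifold. Then $\Sh(Q)=\Sh(\mathbb RP^3)$ should follow because $C$ can be chosen to carry the acyclic fundamental-group kernel of $\kappa$, so that the shape collapses onto the $C_2$ part. $Z$ is not an ANR because otherwise $E_\pm$ would be pseudo-collarable or even collarable, contradicting Kwasik--Schultz's non-existence of small $1$-neighborhoods. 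The main obstacle is the construction of $f$ (equivalently of $C$), together with the verification that the resulting quotient is finite-dimensional and ANR. This is where shrinking and decomposition theory must be done carefully, and I would first try to build $C$ as an intersection of nested handlebodies in $Q$ whose inclusions kill $\ker\kappa$.
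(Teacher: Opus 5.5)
Your proposal has a genuine gap: its central step cannot work as stated. A proper cell-like map $f\colon E_\pm\to Q\times[0,\infty)$, or one in the opposite direction, between these finite-dimensional locally compact ANRs would be a proper homotopy equivalence by Lacher's theorem. It would therefore induce a pro-isomorphism of fundamental groups at infinity. But the end of $M_\alpha$ has stable group $C_2$, while $Q\times[0,\infty)$ has stable group $\pi_1Q$, an extension of $C_2$ by the nontrivial perfect group $K=\pi_1\Sigma$.

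The same error reappears in your shape argument. If $Q\to Q/C$ were cell-like, then $\Sh(Q/C)=\Sh(Q)$. But $\Sh(Q)\ne\Sh(\mathbb RP^3)$, because $Q$ is a closed manifold with $\pi_1Q\not\cong C_2$. The continuum $C$ must be \v{C}ech-acyclic but \emph{not} cell-like; in the paper it is not even nearly $1$-movable. That is exactly what lets the collapse kill $K$ in the shape fundamental group, and what makes $Z$ fail to be an ANR (via Shrikhande's theorem).

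Your non-ANR argument has no theorem behind it. An ANR $\mathcal Z$-boundary would only give stable pro-$\pi_1$, which this end already has. In dimension four, stable pro-$\pi_1$ with vanishing obstructions yields only a weak collar, as the Kwasik--Schultz end itself shows. A push along a product coordinate is also unavailable at the collapsed point. There the paper needs the local deletion equivalences $O_i-F\simeq O_i$. Finally, your local picture is off. The points $p_\pm$ have genuine manifold neighborhoods, not cones on $\Sigma$. A punctured neighborhood is the one-sided stack of periodic blocks $T(V,\alpha)$, and Freedman's contractible manifold bounded by $\Sigma$ plays no role.

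The missing ideas are the construction of the cap and the transfer to the actual end. The paper works in the compact piece $A=(P^4(V,\alpha)-\operatorname{Int}B)/C_2$. There the quotient $2$-handle cocores are embedded framed disks whose boundaries normally generate $K$ in $\pi_1Q$. Perfectness of $K$ gives a finite commutator system with $\rho\phi=\rho$ and $\phi(F_m)\subseteq F_m'$. The word-box lemma realizes $\phi$ by a self-embedding of a relative $4$-ball $(B_0,H_0)$ built from those disks. This gives:
\begin{itemize}
\item $F=\bigcap_iB_i$ is cell-like in dimension four, while $C=F\cap Q$ is only acyclic;
\item $\pi_1(\operatorname{Int}B_i-F)=1$;
\item duality supplies the deletion equivalences needed for Lacher's theorem and for the $\mathcal Z$-set property of $Q/C$ in $A/F$.
\end{itemize}
Because this happens in an auxiliary end $W=A-(F\cup Q)$, one then needs the Freedman--Quinn weak-end theorem for the good group $C_2$. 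One also needs the classification of weak collars up to $R$-homology $h$-cobordism (Section 11.9C(3)). Together these show that $W$ and the Kwasik--Schultz end have homeomorphic germs. The compactification is then grafted on using a relative $\mathcal Z$-push and Hanner's adjunction theorem. Your closing remark about nested handlebodies in $Q$ points in the right direction. Without the four-dimensional balls, the commutator iteration and the germ identification, however, the proposal does not reach the theorem.
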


\begin{corollary}\label{Cor: KS non-pseudo-collar}
There exists a $\mathcal{Z}$-compactifiable open $4$-manifold which is not
pseudo-collarable.
\end{corollary}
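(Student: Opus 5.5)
The corollary follows from \cref{Th: KS Z-compactification} once we know that $M_\alpha$ is not pseudo-collarable. I will take $M_\alpha$ itself as the example. The first half is immediate: \cref{Th: KS Z-compactification} gives a compact ANR $\mathcal{Z}$-compactification of $M_\alpha$. It remains to show that neither end of $M_\alpha$ (in fact, at least one end) admits a pseudo-collar neighborhood.

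Recall the definition. A \emph{pseudo-collar} is a manifold $N$ with compact boundary such that $\partial N\hookrightarrow N$ is a homotopy equivalence, and $N$ contains arbitrarily small neighborhoods of infinity $N'$ of the same kind. Each cobordism $N-\operatorname{int}N'$ is then a one-sided $h$-cobordism. In particular, the inclusions $\partial N'\hookrightarrow N'$ are homotopy equivalences, and so induce isomorphisms on $\pi_1$. So if an end of $M_\alpha$ were pseudo-collarable, it would have arbitrarily small closed neighborhoods $U$ with $\pi_1(\partial U)\to\pi_1(U)$ an isomorphism. These are exactly $1$-neighborhoods in the sense of Kwasik--Schultz.

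The main step, and the only real obstacle, is to invoke the Kwasik--Schultz obstruction correctly. Their construction of the non-desuspendable action on $S^4(V,\alpha)$ shows the following for the orbit manifold of the complement of the fixed points. Both ends are stable, with end group $\pi_1\cong C_2$, and they satisfy the finiteness conditions. Nevertheless, the ends admit no arbitrarily small $1$-neighborhoods, as the introduction states.

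I would sketch the reason, to make sure the quoted statement applies to the neighborhoods produced by a pseudo-collar. Suppose small $1$-neighborhoods existed near both fixed points. Their boundaries would be closed $3$-manifolds homotopy equivalent to $\mathbb{R}P^3$. Lifting to $W_\alpha$ would give an equivariant decomposition of $S^4(V,\alpha)$. From that decomposition one could build an equivariant desuspension, or equivalently a contradiction with the invariant $\mu(\Sigma)=8\pmod{16}$ together with the $L$-theoretic class $\alpha$. This contradicts \cite[Thm.~2.1]{KS88}.

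Since a pseudo-collar supplies precisely such arbitrarily small $1$-neighborhoods, no end of $M_\alpha$ is pseudo-collarable. Combined with \cref{Th: KS Z-compactification}, $M_\alpha$ is a $\mathcal{Z}$-compactifiable open $4$-manifold which is not pseudo-collarable.

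The one point to check carefully is whether Kwasik--Schultz's obstruction concerns a single end or needs both ends simultaneously. If it needs both, that is still enough. A pseudo-collarable open manifold has every end pseudo-collarable, so for $M_\alpha$ both ends would acquire small $1$-neighborhoods at once, which is exactly the situation their argument rules out.
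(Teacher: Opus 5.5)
Your argument is correct, and it takes a more direct route than the paper. The paper starts from a pseudo-collar of an end of $M_\alpha$ and uses the fact that the stable end group $C_2$ is good. It then invokes Guilbault--Tinsley's Proposition~2.3 (a $4$-dimensional pseudo-collar with good stable fundamental group contains an open collar) to produce an honest collar, which it observes is a $1$-neighborhood, contradicting Kwasik--Schultz.

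You skip this upgrade. Each homotopy collar $N'$ inside a pseudo-collar already has the defining property of a $1$-neighborhood, because $\partial N'\hookrightarrow N'$ is a homotopy equivalence and hence a $\pi_1$-isomorphism. So the pseudo-collar itself supplies arbitrarily small $1$-neighborhoods.

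The two routes buy different things. Your deduction from the Kwasik--Schultz statement needs no Freedman--Quinn end theory and no goodness hypothesis. The paper's detour proves the stronger intermediate fact that pseudo-collarability would force collarability. In principle that means it only needs the weaker input that the ends are not collarable, although as written it still appeals to the absence of $1$-neighborhoods.

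Your argument also survives a stricter convention in which a $1$-neighborhood $U$ must have $\pi_1U$ equal to the end group. The cobordisms between successive homotopy collars are one-sided $h$-cobordisms, so the bonds $\pi_1N_{i+1}\to\pi_1N_i$ are epimorphisms. Stability of the end then forces $\pi_1N_i\cong C_2$, with isomorphic bonds, for all sufficiently deep $N_i$.

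Finally, your heuristic paragraph on why the Kwasik--Schultz obstruction holds is not needed and is not a proof. That is the paragraph invoking an equivariant desuspension, or a clash with $\mu(\Sigma)$ and $\alpha$. Since you cite their theorem anyway, drop it rather than lean on it.
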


This answers the question of Guilbault and Tinsley
\cite[p.~285]{GT03} asking whether a $\mathcal{Z}$-compactifiable open
manifold can fail to be pseudo-collarable.

We fix some terminology.  A \emph{$C_2$-marking} on a connected space
$X$ is a chosen homomorphism
\(
        \lambda_X:\pi_1X\to  C_2\).
A map $f:X\to Y$ between $C_2$-marked spaces is \emph{marked} if
$\lambda_Y f_*=\lambda_X$ (with compatible choices of base points), and a
\emph{marked homotopy equivalence} is a homotopy equivalence which is
marked.  When $\lambda_X$ is onto, the associated connected double cover
will be called the \emph{reference double cover}.  We put
$R=\mathbb Z[C_2]$ and use the notation
\[
        H_*(X;R):=H_*(\widetilde X;\mathbb Z)
\]
for the ordinary integral homology of the reference double cover
$\widetilde X\to X$; this is the usual homology with coefficients in the
regular $\mathbb Z[C_2]$-module.

For an end of a space $X$, its \emph{homeomorphism germ at infinity} is
represented by a cofinal closed neighborhood of that end, with two such
neighborhoods representing the same germ when sufficiently small cofinal
subneighborhoods are homeomorphic.  We write $\germ X$ when the end under
consideration is clear.  In the terminology of Freedman and Quinn, an end
of a manifold $M$ is \emph{tame} if some closed neighborhood $V$ of the end
admits a proper map
\[
        h:V\times(0,1]\longrightarrow M,
        \qquad h(v,1)=v.
\]
A \emph{weak collar} is such a neighborhood for which the proper map may
be chosen to take values in $V$ itself,
\[
        h:V\times(0,1]\longrightarrow V,
        \qquad h(v,1)=v;
\]
see \cite[Sections~11.9A--11.9B]{FQ90}.  If the fundamental group of a weak collar
is identified with $C_2$, its \emph{weak-collar data} are
$(\partial V,\pi_1\partial V\to C_2)$ as in
\cite[Section~11.9C]{FQ90}.  An \emph{$R$-homology $h$-cobordism of data}
between $(N_0,\lambda_0)$ and $(N_1,\lambda_1)$ is a compact cobordism
$L$ carrying a $C_2$-marking which restricts to the given boundary
markings and satisfies
\[
        H_*(L,N_0;R)=H_*(L,N_1;R)=0.
\]

We write $\Sh(X)$ for the (Borsuk) shape of a compactum $X$.  A
compactum $X$ is \emph{cell-like} if $\Sh(X)=\Sh(\{*\})$; a map is
cell-like if each point inverse is cell-like.  A compactum $C$ is
\emph{nearly $1$-movable} if, for each neighborhood $U$ of $C$, there is a
neighborhood $V\subset U$ such that, for every neighborhood $T\subset V$,
each loop in $V$ can be homotoped in $U$ to a loop in $T$; base points may
move.  This is the formulation used by Shrikhande
\cite[pp.~121--123]{Shr83}.  Finally, by an \emph{integral homology
$3$-manifold} we mean a locally compact space having the local singular
homology groups of $\mathbb R^3$; no ANR hypothesis is included.

We first describe the proof.  Delete a small invariant ball from
$P^4(V,\alpha)$ and take the free quotient.  The cocores of the quotient
$2$-handles give pairwise disjoint framed proper disks whose boundary
circles normally generate $\ker\kappa$.  In Section~3 these disks are used
to construct a nested intersection
\(
        F=\cap_i B_i
\)
of relative $4$-balls.  The compactum $F$ is cell-like, and deleting it
preserves both the $C_2$-marked fundamental group and homology with
$R=\mathbb Z[C_2]$ coefficients.  Collapsing $F$ therefore produces a
compact ANR with a $\mathcal{Z}$-boundary.

The compactum $F$ is constructed in the punctured $2$-handlebody coming
from $P^4(V,\alpha)$, not in the periodic cobordism used by Kwasik and
Schultz to build the actual end.  To pass from this auxiliary construction
to the Kwasik--Schultz end, we use the weak-end theorem and the
classification of weak collars in Freedman and Quinn
\cite[Sections~11.9B--11.9C]{FQ90}.  The complementary cobordism is an
$R$-homology $h$-cobordism of weak-collar data, so the two ends have
homeomorphic germs.  The signature-bearing homology remains in the compact
part of the punctured $2$-handlebody.

Section~2 distinguishes the punctured Kwasik--Schultz $2$-handlebody from
the periodic block and records the cocore disks used later.  In Section~3
we construct the nested relative cap, form the compact ANR quotient, and
determine the local and shape properties of its $\mathcal{Z}$-boundary.
Section~4 compares the resulting auxiliary end with the actual
Kwasik--Schultz end and completes the proof.

\section{The compact Kwasik--Schultz piece}

We begin with the compact part of the Kwasik--Schultz construction that
will be used below.  Recall that
\[
 \Sigma=\Sigma(V,\alpha),\qquad Q=\Sigma/C_2,
 \qquad
 K=\ker\bigl(\kappa:\pi_1Q\twoheadrightarrow C_2\bigr)=\pi_1\Sigma.
\]
The manifold $\Sigma$ is an integral homology $3$-sphere.  It follows that
$K$ is perfect, and it is finitely generated because $\Sigma$ is compact.
Moreover, $K\ne 1$: otherwise $\Sigma$ would be a homotopy $3$-sphere and
would have trivial Rochlin invariant, contrary to the Kwasik--Schultz
calculation \cite[proof of Thm.~2.1, pp.~448--449]{KS88}.

There are two different compact $4$-dimensional pieces in the
Kwasik--Schultz construction, and we will use both.  First, let $B\subset
P^4(V,\alpha)$ be a small invariant linear ball about its unique fixed
point and put
\[
 A=\bigl(P^4(V,\alpha)-\operatorname{Int}B\bigr)/C_2.
\]
Then $A$ is a compact cobordism with
\[
 \partial A=P_0\sqcup Q,
 \qquad
 P_0=S(V)/C_2\cong\mathbb{R}P^3.
\]
These two boundary components should not be confused: $P_0$ is the
quotient of the small linear sphere around the fixed point, whereas
$Q=\Sigma/C_2$ is the quotient of the Kwasik--Schultz homology sphere.
In particular, they are not homeomorphic in the present example.  The
reference double cover of $A$ is
\[
                \widetilde A=P^4(V,\alpha)-\operatorname{Int}B.
\]
The involution is orientation preserving.  Indeed, its linear action on
$S(V)=S^3$ is antipodal, and its orientation character is already
determined near the fixed point.  Thus $A$ and $Q$ are oriented, and the
reference cover $\widetilde A$ is oriented.

The second piece is the periodic block $T(V,\alpha)$.  Kwasik and Schultz
obtain it by performing equivariant surgery, away from the boundary, on
\[
        \Sigma\times I\longrightarrow S(V)\times I.
\]
Consequently, both boundary components of $T(V,\alpha)$ are copies of
$\Sigma$, and both boundary components of $T(V,\alpha)/C_2$ are copies of
$Q$.  It is copies of this latter cobordism, not copies of $A$, that are
concatenated to form the two-ended Kwasik--Schultz manifold; see
\cite[p.~448 and Cor.~3.4]{KS88}.  The role of $A$ below is different: its
$2$-handle cocores provide the disk system from which we construct the
cell-like compactum.

\begin{remark}\label{Rem: KS concatenation}
The definition of $M_\alpha$ in terms of the two fixed points could
obscure its geometric structure.  In the Kwasik--Schultz construction,
$T(V,\alpha)$ is a compact cobordism whose two boundary components are
copies of the homology sphere $\Sigma(V,\alpha)$.  By concatenating
copies of this cobordism in both directions, one obtains
\[
 T_\infty(V,\alpha)
 =
 \cdots\cup T_{-1}\cup T_0\cup T_1\cup\cdots ,
 \qquad T_i\cong T(V,\alpha).
\]
The endpoint compactification of $T_\infty(V,\alpha)$ is
$S^4(V,\alpha)$, with the two added endpoints corresponding to the
fixed points $p_-$ and $p_+$.  Thus
\[
 S^4(V,\alpha)-\{p_-,p_+\}
 \cong T_\infty(V,\alpha),
\]
and hence
\[
 M_\alpha\cong T_\infty(V,\alpha)/C_2.
\]
If
\[
 Q=\Sigma(V,\alpha)/C_2,
 \qquad
 \overline T=T(V,\alpha)/C_2,
\]
then $\overline T$ is a compact cobordism from $Q$ to $Q$, and
$M_\alpha$ may equivalently be viewed as the bi-infinite
concatenation
\[
 M_\alpha\cong
 \cdots\cup_Q\overline T\cup_Q\overline T
 \cup_Q\overline T\cup_Q\cdots .
\]
See \cite[proof of Theorem~2.1, p.~448; Corollary~3.4,
pp.~450--451]{KS88}.
\end{remark}

\begin{proposition}\label{Prop: compact piece cocores}
The manifold $A$ has fundamental group $C_2$.  Furthermore, there are
pairwise disjoint neat locally flat framed proper disks
\begin{equation}\label{Eq: compact piece disks}
                    D_1,\ldots,D_s\subset A,
                    \qquad \partial D_j\subset Q,
\end{equation}
whose boundary classes normally generate $K$ in $\pi_1Q$.
\end{proposition}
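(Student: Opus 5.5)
We read off both claims from the handle structure of $P^4(V,\alpha)$. Kwasik and Schultz build $P^4(V,\alpha)$ from $D(V)$ by attaching finitely many equivariant $2$-handles, i.e.\ $C_2$-orbits of $2$-handles $\{h_j,\tau h_j\}$, $j=1,\dots,s$, attached along free orbits of framed circles in $S(V)$. Deleting a small invariant ball $B$ about the fixed point (the origin of $D(V)$) leaves
\[
\widetilde A\cong \bigl(S(V)\times I\bigr)\cup \bigcup_{j}(h_j\cup\tau h_j),
\]
and the involution acts freely. The quotient is therefore
\[
A\cong (P_0\times I)\cup h_1\cup\dots\cup h_s ,
\]
a relative handlebody on $P_0\cong\mathbb{R}P^3$ with $s$ ordinary $2$-handles, and the framings descend because the action is free.

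\textbf{Fundamental group.} Since $P_0\times I$ has fundamental group $C_2$, van Kampen shows that $\pi_1A$ is $C_2$ modulo the normal closure of the attaching circles of the $h_j$. Each attaching circle lifts to a closed curve (one of the attaching circles of $h_j$ in $S(V)$), so its class lies in $\pi_1 S(V)=1\subset\pi_1P_0$. Thus every relation is trivial and $\pi_1A=C_2$. Equivalently, $\widetilde A$ is simply connected, being $S^3\times I$ with $2$-handles attached, and it is the connected reference double cover.

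\textbf{The disks.} We take $D_j$ to be the cocore of $h_j$, namely $\{0\}\times D^2\subset D^2\times D^2$. The cocores are pairwise disjoint, neat, smooth (hence locally flat), framed by the core direction, and their boundaries lie in the outer boundary $Q$. To see that their boundaries normally generate $K$, we turn the handlebody upside down: $A$ is obtained from $Q\times I$ by attaching dual $2$-handles along $\partial D_1,\dots,\partial D_s$ followed by $3$- and $4$-handles. Only the $2$-handles affect $\pi_1$, so
\[
\pi_1A\cong \pi_1Q/\langle\!\langle \partial D_1,\dots,\partial D_s\rangle\!\rangle ,
\]
and the map $\pi_1Q\to\pi_1A$ is the one induced by inclusion. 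We then need this inclusion-induced map to be $\kappa$ under the identification $\pi_1A=C_2$. This holds because the double cover $\widetilde A\to A$ restricts over $Q$ to $\Sigma\to Q$, which is connected and is classified by $\kappa$. Hence the kernel of $\pi_1Q\to\pi_1A=C_2$ is exactly $K$, so $\langle\!\langle\partial D_j\rangle\!\rangle=K$.

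\textbf{Main obstacle.} The delicate point is the bookkeeping in the upside-down decomposition. The dual handles of the $4$-handle (here absent) and of any $0$-, $1$-handles (also absent, since $A$ was built on $P_0\times I$ using only $2$-handles) would add $3$- and $4$-handles, which leave $\pi_1$ unchanged. We also have to verify that the base points and the marking match, i.e.\ that the restriction of the reference cover of $A$ to $Q$ is the cover $\Sigma$ rather than a trivial cover. This follows from $\partial\widetilde A\supset\Sigma$ being connected.
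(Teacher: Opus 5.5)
Your proposal is correct and follows essentially the same route as the paper. Simple connectivity of $\widetilde A$ gives $\pi_1A\cong C_2$, and the quotient cocores supply the disks. Van Kampen on the upside-down decomposition, together with identifying $\pi_1Q\to\pi_1A$ with $\kappa$ via the restriction of the cover to $\Sigma$, gives normal generation of $K$; your justification of that identification is, if anything, more explicit than the paper's. The only blemish is the stray mention of dual $3$- and $4$-handles, which, as you later note, do not occur, since the relative decomposition on $P_0\times I$ has only $2$-handles.
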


\begin{proof}
Kwasik and Schultz construct $P^4(V,\alpha)$ from $D(V)$ by attaching a
finite equivariant family of $2$-handles; see the proof of
\cite[Thm.~2.1, p.~448]{KS88}.  Adding $2$-handles to a $4$-ball does not
create fundamental group, and deleting an interior $4$-ball does not
change fundamental group.  Thus $\widetilde A$ is simply connected.  The
action on $\widetilde A$ is free, so
\(\pi_1A\cong C_2\).

The $C_2$-action permutes the attached $2$-handles in free orbits.
For each $C_2$-orbit of $2$-handles, the union of the two cocores descends
to a neat locally flat framed proper disk in $A$ whose boundary lies in
$Q$.  Distinct handle orbits give disjoint disks.  Turn
the quotient handle decomposition upside down.  Reading it from $Q$
toward $P_0$, the attaching circles of the relative $2$-handles are
precisely the boundary circles in \eqref{Eq: compact piece disks}.  The Seifert--van
Kampen theorem gives
\begin{equation}\label{Eq: cocore normal quotient}
 \pi_1Q/\left\langle\!\left\langle
      \partial D_1,\ldots,\partial D_s
 \right\rangle\!\right\rangle
       \cong\pi_1A\cong C_2.
\end{equation}
The homomorphism in \eqref{Eq: cocore normal quotient} is the classifying
homomorphism $\kappa$.  Therefore
\[
 \left\langle\!\left\langle
      [\partial D_1],\ldots,[\partial D_s]
 \right\rangle\!\right\rangle_{\pi_1Q}=K,
\]
as required.
\end{proof}

\section{A relative cap}

The term \emph{relative cap} will be used locally for the nested compactum
constructed in this section.  By a \emph{neat relative $4$-ball} in a
pair $(U,Q)$, where $Q$ is a boundary component of $U$, we mean a neatly
embedded copy $B\cong D^4$ such that $H=B\cap Q$ is a codimension-zero
submanifold of $\partial B$.  In our construction $H$ is a handlebody and
the complementary piece
\[
        J=\overline{\partial B-\operatorname{Int}H}
\]
is the complementary handlebody in a Heegaard splitting of $\partial B$.
The \emph{relative cap} will be the intersection $F=\bigcap_i B_i$ of a
nested sequence of such relative $4$-balls.  This terminology is only a
convenient name for the construction below.

We use the commutator convention $[a,b]=a^{-1}b^{-1}ab$.

\begin{lemma}\label{Lemma: finite commutator system}
Let $K\ne 1$ be a finitely generated perfect group.  There are a finite
ordered list $\ell_1,\ldots,\ell_m\in K$, containing a generating set for
$K$, and words
\begin{equation}\label{Eq: basis commutator words}
 w_j=\prod_{t=1}^{r_j}[x_{p(j,t)},x_{q(j,t)}]
       \in F(x_1,\ldots,x_m)',
 \qquad r_j\geq1,\quad p(j,t)\ne q(j,t),
\end{equation}
such that
\begin{equation}\label{Eq: group commutator equations}
 \ell_j=\prod_{t=1}^{r_j}
       [\ell_{p(j,t)},\ell_{q(j,t)}]
\end{equation}
for every $j$.  Consequently, the homomorphisms
\begin{equation}\label{Eq: rho phi}
 \rho:F_m\twoheadrightarrow K,\quad \rho(x_j)=\ell_j,
 \qquad
 \phi:F_m\longrightarrow F_m,\quad \phi(x_j)=w_j,
\end{equation}
satisfy
\begin{equation}\label{Eq: rho phi identities}
 \rho\phi=\rho,\qquad \phi(F_m)\subseteq F_m',
 \qquad \rho\phi^r=\rho\quad (r\geq 0).
\end{equation}
\end{lemma}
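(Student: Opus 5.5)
Since $K$ is perfect and finitely generated, I would start from a finite generating set $g_1,\ldots,g_n$ of $K$. Because $K=[K,K]$, each $g_i$ can be written as a finite product of commutators $[a,b]$ with $a,b\in K$. The entries $a,b$ need not lie in the generating set, so the plan is to close up the list in finitely many steps.

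\emph{Step 1: closing the list.} Write each $g_i$ as a word in $g_1^{\pm1},\ldots,g_n^{\pm1}$; in particular each commutator entry is such a word. Rather than adding every entry as a new list element, which would create an infinite regress, I would use the following observation. Every element of $K$ is a product of commutators of the form $[g_p,y]$ with $y\in K$. In turn, $y$ is a word in the generators, and the commutator identities
\[
[x,yz]=[x,z]\,z^{-1}[x,y]z,\qquad [x,y^{-1}]=y[x,y]^{-1}y^{-1}=y[y,x]y^{-1}
\]
express each commutator with a generator as a product of conjugates of commutators $[g_p,g_q]^{\pm1}$. A conjugate $c^{-1}[a,b]c$ equals $[a^c,b^c]$, and $[a,b]^{-1}=[b,a]$. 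Hence every element of $K=[K,K]$ is a product of commutators $[u,v]$, where $u,v$ range over conjugates of generators by words in the generators. This set is still infinite, so a direct closure argument is needed.

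\emph{Step 2: making the closure finite.} Set $\ell_1,\ldots,\ell_n=g_1,\ldots,g_n$. For each $j\le n$, write $\ell_j=\prod_t[a_{jt},b_{jt}]$, and append all the $a_{jt},b_{jt}$ to the list. Each new element $c$ must then itself satisfy an equation of the form \eqref{Eq: group commutator equations}. The trick is to choose one fixed finite expression per generator and propagate it through words. Since $[a,b]$ depends only on the elements $a,b$, we may enlarge the list freely: for a new element $c=\prod_t[a_t,b_t]$ we need $a_t,b_t$ already in the list. This is an honest closure problem, and I would resolve it by taking the list to consist of elements of a fixed finite \emph{symmetric} set closed under the needed operations.

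Cleaner route: reduce to a single generator type via the elements $g_i$ themselves. Write $g_i=\prod_t[u_{it},v_{it}]$, and expand $u_{it},v_{it}$ as products of generators. Then, using $[xy,z]=y^{-1}[x,z]y\,[y,z]$, rewrite $g_i$ as a product of commutators whose entries are conjugates $h^{-1}g_kh$, where $h$ is a product of generators. The key move is to set the list to be $g_1,\ldots,g_n$ together with all such conjugates $h^{-1}g_kh$ occurring in these finitely many rewritings. For the conjugate elements, the required equation is obtained by conjugating the equation for $g_k$:
\[
h^{-1}g_kh=\prod_t[h^{-1}u_th,\,h^{-1}v_th].
\]
This produces entries conjugated one level deeper, so the list still does not close.

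\emph{Main obstacle and fix.} The finiteness of the closure is the real difficulty, and conjugation depth is what grows. I would avoid it by allowing the equations to use arbitrary list elements with repetition. Take the list to be $g_1,\ldots,g_n$ together with $\ell'_{k}=$ the finitely many entries $u_{it},v_{it}$. Then take a \emph{second} copy: re-express each entry $e$ as a product of commutators of the $g$'s and entries, $e=\prod[\cdot,\cdot]$, with all commutator arguments lying among $g$'s and entries. This is possible because the entries generate $K$ and $K$ is perfect, so $e$ lies in the commutator subgroup generated by commutators of the generating set $\{g\}\cup\{\text{entries}\}$ and their conjugates. Conjugates are the obstruction, but conjugation is absorbed as follows: $[a,b]^c=[a,b][[a,b],c]$, and $[a,b]$ itself is again a list-expressible element. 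Concretely, I would add the finitely many elements $[g_p,g_q]$ to the list, and express $[g_p,g_q]$ trivially as a one-factor product $[g_p,g_q]$. This gives $r_j=1$ and $p\ne q$ wherever $g_p\ne g_q$, using $K\neq1$ to discard trivial generators. I expect this bookkeeping to be the step needing care.

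Finally, once \eqref{Eq: group commutator equations} holds, the identities \eqref{Eq: rho phi identities} are immediate. We have $\rho\phi(x_j)=\rho(w_j)=\ell_j=\rho(x_j)$. Each $w_j$ lies in $F_m'$, so $\phi(F_m)\subseteq F_m'$. Induction on $r$ gives $\rho\phi^r=\rho$.
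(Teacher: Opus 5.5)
There is a genuine gap. Your final paragraph, deriving $\rho\phi=\rho$, $\phi(F_m)\subseteq F_m'$ and $\rho\phi^r=\rho$, is fine. But the core of the lemma is never established: you do not show that the list closes up in finitely many steps.

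In your ``fix'' you assert that each entry $e=u_{it},v_{it}$ is a product of commutators whose arguments lie among the $g$'s and the entries. The reason you give is that $[K,K]$ is the normal closure of commutators of generators. That only makes $e$ a product of \emph{conjugates} of such commutators. Your device $[a,b]^c=[a,b]\,[[a,b],c]$ does not remove the conjugator: the new factor has the arbitrary element $c$ as an argument. So $c$ must be added to the list, and $c$ then needs its own equation, which brings conjugates back. Adding the elements $[g_p,g_q]$ with one-factor expressions only gives equations for those new elements, not for the entries $e$. The regress you noticed in your earlier attempts is therefore never ended.

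The missing observation, which is the paper's whole proof, is that conjugates never arise.
\begin{itemize}
\item Let $L$ consist of the nonidentity $g_i$ together with the nonidentity $u_{it},v_{it}$.
\item Let $S=\langle [a,b] : a,b\in L\rangle$. By construction each $g_i=\prod_t[u_{it},v_{it}]$ lies in $S$, after omitting trivial factors. Hence $S$ contains a generating set, so $S=K$.
\item Since $[a,b]^{-1}=[b,a]$, every element of $S=K$ is a positive product of commutators of entries of $L$. In particular every entry of $L$ is such a product, so the list closes after a single round.
\item Every entry is nontrivial, so some nontrivial commutator factor appears, giving $r_j\geq 1$. A nontrivial commutator $[a,b]$ has $a\neq b$, so the indices $p(j,t)\neq q(j,t)$ are automatically distinct.
\end{itemize}
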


\begin{proof}
Choose generators $s_1,\ldots,s_d$ for $K$.  Since $K=[K,K]$, each $s_i$
is a finite product of commutators $[a_{it},b_{it}]$.  Let $L$ be a finite
list containing the nonidentity elements $s_i$ and all of the nonidentity
elements $a_{it}$ and $b_{it}$ which occur in these expressions; trivial
commutators are omitted.  The subgroup generated by the
pairwise commutators of entries of $L$ contains every $s_i$, and hence is
all of $K$.  It follows that every entry of $L$ is itself a product of
pairwise commutators of entries of $L$.  Replace $[a,b]^{-1}$ with $[b,a]$
when necessary, and repeat entries of $L$ so that distinct indices may be
used in every nontrivial commutator.  Taking the resulting list as
$\ell_1,\ldots,\ell_m$ proves \eqref{Eq: basis commutator words} and
\eqref{Eq: group commutator equations}.  Equations
\eqref{Eq: rho phi}--\eqref{Eq: rho phi identities} follow immediately.
\end{proof}

We use the following elementary $4$-dimensional construction to iterate
the algebra of Lemma \ref{Lemma: finite commutator system}.

\begin{lemma}[Rectangular word-box lemma]\label{Lemma: rectangular word box}
Let $\mathcal A=A_1\sqcup\cdots\sqcup A_m\subset S^3$ be the standard
unlink and identify
\[
        \pi_1(S^3-\mathcal A)=F(x_1,\ldots,x_m)
\]
by means of based meridians.  Given the words $w_j$ in
\eqref{Eq: basis commutator words}, there is a split zero-framed unlink
$\mathcal C=C_1\sqcup\cdots\sqcup C_m\subset S^3-\mathcal A$ such that
\begin{equation}\label{Eq: literal word realization}
                         [C_j]=w_j
             \quad\text{in }\pi_1(S^3-\mathcal A).
\end{equation}
If $\Delta_j\subset D^4$ are the standard pushed-in disks bounded by the
$C_j$, then
\begin{equation}\label{Eq: old curves die}
 [A_k]=1\quad\text{in}\quad
 \pi_1\left(D^4-\bigcup_j\Delta_j\right)
 \qquad (1\leq k\leq m).
\end{equation}

Moreover, the two unlinks may be equipped with connecting trees so that,
if $B_1$ is a regular neighborhood of the disks $\Delta_j$ and the second
tree in $B_0=D^4$, then $B_1$ is a $4$-ball and there is a
homeomorphism
\[
 (B_0;H_0,J_0)\longrightarrow(B_1;H_1,J_1)
\]
which preserves the chosen ordered meridian and core coordinates on the
boundary.  Here $(H_i,J_i)$ is the standard genus-$m$ Heegaard splitting
of $\partial B_i$.  If
\[
                   Y_0=\overline{B_0-\operatorname{Int}B_1},
\]
then, with the induced free-group coordinates,
\[
 \pi_1J_1\xrightarrow{\cong}\pi_1Y_0,
 \qquad
 \pi_1J_0\longrightarrow\pi_1Y_0\text{ is trivial},
\]
and the inclusion $H_1\hookrightarrow H_0$ induces the homomorphism
$\phi$ in \eqref{Eq: rho phi}.
\end{lemma}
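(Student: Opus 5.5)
Since $\pi_1(S^3-\mathcal A)=F_m$ is free, any element of $F_m$ is represented by an embedded circle in $S^3-\mathcal A$. The difficulty is making the $C_j$ a split zero-framed unlink in $S^3$ while $[C_j]=w_j$. The plan uses the commutator form of $w_j$. For each $j$, start from a small unknot $C_j^0$ in a ball disjoint from $\mathcal A$ and from the other $C_k^0$. Realize $w_j$ by finger moves: for each factor $[x_p,x_q]$, push an arc of $C_j^0$ along a band that runs around $A_p$ and $A_q$ in a commutator (Bing/Whitehead-double style) pattern. Then $C_j$ bounds an embedded disk in $S^3$ that meets $\mathcal A$ but not the other $C_k$. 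This is the standard picture: a commutator of meridians is represented by an unknot in the complement of the unlink.

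To obtain the split property, I would do the $m$ constructions in disjoint slabs $S^3\supset\{t\in[j,j+1]\}$. Each slab meets every $A_k$ in a trivial arc, which is possible after isotoping the unlink to be "vertical" in a product neighborhood. Each $C_j$ then bounds a disk in its own slab, so $\mathcal C$ is a split unlink in $S^3$. The zero framing comes from the disk framing, and base paths to a common base point give \eqref{Eq: literal word realization}. The condition $p(j,t)\ne q(j,t)$ guarantees each factor is realized by clasping with two distinct components. For \eqref{Eq: old curves die}, push the disks bounded by the $C_j$ into $D^4$ to obtain $\Delta_j$. By the standard computation, $\pi_1(D^4-\bigcup\Delta_j)$ is freely generated by the meridians of the $C_j$, and the pushed-in disk picture kills nothing else. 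Each $A_k$ bounds a disk in $S^3$ whose intersections with the $C_j$ can be read off the construction. A better route is to note that $A_k$ is null-homotopic in $S^3-\mathcal C$. Indeed, in the commutator-clasp construction each $A_k$ links every $C_j$ with algebraic intersection zero, and its class is a product of conjugated commutators of meridians of $\mathcal C$. I expect this step to be the main obstacle. To handle it, I would choose the realization symmetric: each commutator clasp is a Bing-type configuration, so the roles of $\mathcal A$ and $\mathcal C$ can be interchanged. I would then verify directly, by a Wirtinger computation, that the meridian-product $A_k$ is trivial after the pushed disks are removed, using that a meridian of $\Delta_j$ bounds in $D^4-\bigcup\Delta_j$ only when it is trivial. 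If this fails, fall back on choosing the $C_j$ so that $\mathcal A\cup\mathcal C$ is a boundary link with respect to the $\mathcal C$-sublink.

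For the ball statement, add a tree joining the $C_j$, yielding a genus-$m$ handlebody $H_1$ after thickening, and a second tree joining the $A_k$ inside $B_0=D^4$. The regular neighborhood $B_1$ of the collapsible complex $\bigcup\Delta_j\cup$ tree is a $4$-ball. Its boundary carries the standard genus-$m$ splitting, with $H_1$ a neighborhood of $\mathcal C\cup$ tree on $S^3$ and $J_1$ the complementary handlebody whose cores are meridians of the $\Delta_j$. Any two standard genus-$m$ splittings of balls are homeomorphic compatibly with ordered meridian/core coordinates. Then $Y_0$ is the complement of a neighborhood of the disks and tree. By the free computation above, $\pi_1Y_0$ is free on the meridians of the $\Delta_j$, that is, $\pi_1J_1\cong\pi_1Y_0$. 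Moreover, $\pi_1J_0\to\pi_1Y_0$ is trivial, because the cores of $J_0$ are the $A_k$ (meridians of the second tree's handlebody), which die by \eqref{Eq: old curves die}. Finally, $H_1\hookrightarrow H_0$ sends the core of $C_j$ to $w_j$ in $\pi_1H_0=\pi_1(S^3-\mathcal A)=F_m$, which is $\phi$.
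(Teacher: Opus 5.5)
Your construction of $\mathcal C$ in disjoint slabs matches the paper, and so do your later steps: the trees, the Heegaard splittings, the Alexander-trick homeomorphism, $\pi_1J_1\cong\pi_1Y_0$, and $H_1\hookrightarrow H_0$ inducing $\phi$. The gap is \eqref{Eq: old curves die}. You correctly reduce it to $[A_k]=1$ in $\pi_1(S^3-\mathcal C)$, but you do not prove this, and you flag it yourself as the main obstacle. Your stated reason is that $A_k$ has linking number zero with each $C_j$ and so is a product of conjugated commutators of meridians of $\mathcal C$. That only places $[A_k]$ in the commutator subgroup of $\pi_1(S^3-\mathcal C)\cong F_m$, which is nonabelian because $m\geq 2$. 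A curve can link every component of a split unlink trivially and still be a nontrivial commutator of their meridians; this is exactly how your $C_j$ sit relative to $\mathcal A$. None of the fallbacks repairs this:
\begin{itemize}
\item Borromean symmetry interchanges roles only for a single clasp, not for a band sum of several axes threaded by the same strands.
\item The Wirtinger computation is not specified.
\item A boundary-link condition would again only put $[A_k]$ in a commutator subgroup.
\end{itemize}
Since the triviality of $\pi_1J_0\to\pi_1Y_0$ rests on \eqref{Eq: old curves die}, that conclusion is also unsupported.

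The missing idea is that your slabs already localize the problem to one $C_j$ at a time, where the relevant group is abelian. Take the slabs to be product boxes $D^2\times I_j$ in a ball in which $\mathcal A$ is vertical. (Level-set shells $S^2\times I$ of a function on $S^3$ would not do: a closed curve crosses them an even number of times, and you need the complement of the open slabs to be simply connected.) The only component of $\mathcal C$ in box $j$ is $C_j$, and it bounds a disk there, so $\pi_1(D^2\times I_j-C_j)\cong\mathbb Z$, detected by linking number. Close the arc $A_k\cap(D^2\times I_j)$ by an arc in $\partial(D^2\times I_j)$. The rest of $A_k$, closed by the same boundary arc, bounds in the complementary ball, so this loop has linking number $\operatorname{lk}(A_k,C_j)$ with $C_j$. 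That is the exponent sum of $x_k$ in $w_j\in F_m'$, namely $0$. Hence the arc is homotopic rel endpoints, inside $D^2\times I_j-C_j$, to the boundary arc. Doing this in every box moves $A_k$, within $S^3-\mathcal C$, into the complement of finitely many disjoint open $3$-balls, which is simply connected. So $[A_k]=1$ in $\pi_1(S^3-\mathcal C)\cong\pi_1(D^4-\bigcup_j\Delta_j)$. This is precisely the paper's argument.
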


\begin{proof}
Choose a product ball $D^2\times[0,1]\subset S^3$ in which every component
$A_k$ appears as a vertical strand $\{a_k\}\times[0,1]$.  Close the
strands by the standard trivial tangle in the complementary ball.  Choose
disjoint product boxes
\[
 D^2\times I_1,\ldots,D^2\times I_m,
 \qquad I_1<I_2<\cdots<I_m,
\]
so that every strand passes once through every box and in the same order.

Inside the $j$th box, place one Borromean axis around the two strands
indexed by $p(j,t)$ and $q(j,t)$ for every commutator occurring in $w_j$.
With product base paths, the corresponding axis represents the chosen
word $[x_{p(j,t)},x_{q(j,t)}]$; this is the standard Borromean
calculation, cf. \cite[Sec.~4.2 and Fig.~4.4]{CFT09}.  Join these axes in
their prescribed order by disjoint untwisted product bands.  The resulting
circle is $C_j$.  It bounds an embedded disk in the $j$th box, since it is
a boundary connected sum of unknotted axes.  Thus all the $C_j$ form a
split zero-framed unlink.  The two sides of each band contribute inverse
base paths, and hence the band-sum calculation gives
\eqref{Eq: literal word realization} with the indicated base paths.

We verify \eqref{Eq: old curves die}.  Fix $A_k$ and one word box.  Join
the endpoints of the arc $A_k$ in this box by a reference arc on the
boundary of the box.  Since $w_j$ is a product of commutators, the closed
curve obtained from these two arcs has linking number zero with $C_j$.
The curve $C_j$ is an unknot in the box, so linking number identifies the
fundamental group of its complement with $\mathbb Z$.  Therefore the
original arc is homotopic, rel endpoints, to the reference boundary arc in
the complement of $C_j$.  Perform this operation in all the disjoint word
boxes.  The resulting representative of $A_k$ lies in the complement of
the interiors of finitely many disjoint $3$-balls, which is simply
connected.  Thus $[A_k]=1$ in $\pi_1(S^3-\mathcal C)$.  Boundary inclusion
induces an isomorphism from this group to the fundamental group of the
standard disk exterior in $D^4$, proving \eqref{Eq: old curves die}.

Choose a planar connecting tree $T_{\mathcal A}$ for $\mathcal A$, with
interior disjoint from $\mathcal C$.  To choose the second tree while
retaining the ordered boundary coordinates, begin with the standard planar handcuff
graph for an $m$-component unlink and carry it to $\mathcal C$ by an
ambient homeomorphism.  Perturb the interior of its tree, keeping the
ordered link components fixed, off the one-complex
$\mathcal A\cup T_{\mathcal A}$.  Denote the resulting tree by
$T_{\mathcal C}$.  Put
\[
 J_0=N(\mathcal A\cup T_{\mathcal A}),
 \qquad
 H_0=\overline{\partial B_0-\operatorname{Int}J_0}.
\]
Choose the first neighborhood sufficiently thin that
\[
 \mathcal C\cup T_{\mathcal C}\subset\operatorname{Int}
 \overline{S^3-N(\mathcal A\cup T_{\mathcal A})}
   =\operatorname{Int}H_0.
\]
A regular
neighborhood of either handcuff graph is a standard genus-$m$ handlebody.
Push the interiors of the disks bounded by $\mathcal C$ into $B_0$,
retaining the tree on the boundary, and take a thin regular neighborhood.
A neighborhood of each disk is a $4$-ball.  The thickened edges of the
tree join different components, so their boundary connected sum $B_1$ is
again a $4$-ball.  Under the natural identification,
\[
 H_1=B_1\cap\partial B_0=N(\mathcal C\cup T_{\mathcal C}),
 \qquad
 J_1=\overline{\partial B_1-\operatorname{Int}H_1}.
\]

The disk exterior $Y_0$ has free fundamental group generated by the normal
meridians of the disks $\Delta_j$.  Those same meridians are the ordered
core loops of $J_1$, so $\pi_1J_1\to\pi_1Y_0$ is an isomorphism.  The
ordered core loops of $J_0$ are the curves $A_k$, and
\eqref{Eq: old curves die} shows that their images in $\pi_1Y_0$ are
trivial.  Deleting a thin neighborhood of the connecting tree does not
change these calculations, since loops and null-homotopy disks in a
$4$-manifold may be put in general position off a $1$-complex.

Finally, each $(H_i,J_i)$ is a standard genus-$m$ Heegaard splitting
equipped with an ordered meridian/core system.
Choose an orientation-preserving boundary homeomorphism which carries the
ordered meridian and core system of the first splitting to that of the
second.  It extends across the two $4$-balls by the Alexander trick.  Under
the resulting identifications $\pi_1H_0\cong F_m\cong\pi_1H_1$, the curves
$C_j$ give the generators on the inner side, so
\eqref{Eq: literal word realization} says that the induced homomorphism
$\pi_1H_1\to\pi_1H_0$ is precisely $\phi$.  This completes
the proof.
\end{proof}

\begin{proposition}\label{Prop: relative cap}
Let $U$ be a compact oriented topological $4$-manifold and let $Q$ be a
component of its boundary.  Suppose that
\[
 \pi_1Q\longrightarrow\pi_1U
       \xrightarrow[\cong]{\lambda} C_2
\]
is onto with nontrivial finitely generated perfect kernel $K$.  Suppose,
in addition, that $U$ contains pairwise disjoint neat locally flat framed
proper disks whose boundaries lie in $Q$ and normally generate $K$ in
$\pi_1Q$.

Then there are nested neat relative $4$-balls
\[
 B_{i+1}\subset\operatorname{Int}_U B_i\quad(i\geq0),
 \qquad B_i\cong D^4,
 \qquad H_i=B_i\cap Q,
\]
such that, for
\[
                     F=\bigcap_iB_i,
                     \qquad C=F\cap Q=\bigcap_iH_i,
\]
the following conclusions hold:
\begin{enumerate}[(a)]
\item $F$ is cell-like, while $C$ is connected and \v{C}ech-acyclic
with arbitrary constant coefficients;
\item
\begin{equation}\label{Eq: external image K}
        \operatorname{im}(\pi_1H_i\longrightarrow\pi_1Q)=K
        \quad\text{for every }i;
\end{equation}
\item
\[
        \pi_1(\operatorname{Int}_UB_i-F)=1
        \quad\text{for every }i;
\]
\item the inclusion
\begin{equation}\label{Eq: marked deletion equivalence}
                         U-F\longrightarrow U
\end{equation}
is a marked homotopy equivalence; and
\item $C$ is not nearly $1$-movable.
\end{enumerate}
Moreover, if $O_i=\operatorname{Int}_UB_i$, then
\begin{equation}\label{Eq: local deletion equivalence}
                         O_i-F\longrightarrow O_i
\end{equation}
is a homotopy equivalence for every $i$.
\end{proposition}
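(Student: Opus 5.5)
The construction starts by building $B_0$ and then iterating the word-box lemma.

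\emph{Building $B_0$.} Apply Lemma \ref{Lemma: finite commutator system} to $K$ to obtain $\ell_1,\ldots,\ell_m$, the words $w_j$, the surjection $\rho$ and the endomorphism $\phi$. Represent each $\ell_j$ by a based loop in $Q$ and join these loops by a tree. Take a regular neighborhood $H_0\subset Q$ of the resulting graph; it is a genus-$m$ handlebody. Push $H_0$ slightly into a collar of $Q$ in $U$ to get a neat relative $4$-ball $B_0\cong H_0\times I$, with $H_0=B_0\cap Q$ and $J_0$ the complementary handlebody. By construction $\pi_1H_0\to\pi_1Q$ is $\rho$, so its image is $K$.

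\emph{The inductive step.} Given $B_i$ with the standard coordinates $(B_i;H_i,J_i)$, transplant the rectangular word-box picture of Lemma \ref{Lemma: rectangular word box} into $B_i$ using the homeomorphism it provides. This gives $B_{i+1}\subset\operatorname{Int}_UB_i$ (after a small push away from $J_i$), with three properties:
\begin{itemize}
\item $H_{i+1}\hookrightarrow H_i$ induces $\phi$;
\item $\pi_1J_{i+1}\cong\pi_1Y_i$ for $Y_i=\overline{B_i-\operatorname{Int}B_{i+1}}$;
\item $\pi_1J_i\to\pi_1Y_i$ is trivial.
\end{itemize}
Conclusion (b) then follows from $\rho\phi^i=\rho$.

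I expect not to need the cocore disks for (b)--(d). They seem to be needed only to make $B_0$ sit correctly, and possibly to control $H_*(\,\cdot\,;R)$ in (d).

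\emph{Proof of (a).} $F$ is a nested intersection of $4$-balls, with each inclusion $B_{i+1}\subset \operatorname{Int}B_i$ null-homotopic, so $F$ is cell-like. $C$ is a nested intersection of connected handlebodies, hence connected. Every map $H_{i+1}\to H_i$ induces $\phi$, which has image in $F_m'$ and so is zero on $H_1$. Since $H_2$ vanishes on a handlebody, the inverse limit of Čech cohomology is trivial in positive degrees, for any constant coefficients.

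\emph{Proof of (c).} Write $O_i-F=\bigcup_{k\ge i}(\operatorname{Int}B_i-\operatorname{Int}B_{k+1})$, an increasing union of unions of the $Y$'s. Apply van Kampen to $Y_i\cup_{J_{i+1}}Y_{i+1}\cup\cdots$. Each $\pi_1Y_k$ is generated by $\pi_1J_{k+1}$, and $\pi_1J_{k+1}$ dies in $Y_{k+1}$. Any loop lies in finitely many blocks, so it is pushed deeper and dies. Care is needed with the $H$ part of the boundary, which lies in $Q$ and is not in $O_i$, and is handled via the collar. The same telescope, combined with the fact that the $Y_k$ are disk exteriors with free $\pi_1$ and $H_2$ supported on $J$, gives acyclicity. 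Whitehead's theorem then shows that $O_i-F$ is contractible, as is $O_i$; this is the local statement \eqref{Eq: local deletion equivalence}.

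\emph{Proof of (d).} Apply van Kampen and Mayer--Vietoris to $U=(U-F)\cup O_0$, whose intersection is $O_0-F$. Since both $O_0$ and $O_0-F$ are contractible, the inclusion induces isomorphisms on $\pi_1$. The same holds on the reference double cover, where $O_0$ lifts to two copies, so the marking is preserved and $H_*(\,\cdot\,;R)$ is unchanged. Whitehead's theorem for the $\mathbb Z[C_2]$ cover, together with the isomorphism on $\pi_1$, gives the marked homotopy equivalence.

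\emph{Proof of (e), the main obstacle.} By (b), every $H_i$ has image $K$ in $\pi_1Q$, and the system $\pi_1H_i$ is $F_m\xleftarrow{\phi}F_m\xleftarrow{\phi}\cdots$. Near $1$-movability would force loops of $H_j$ to be homotopic inside $H_i$ to loops in $H_k$ for every $k$. Such loops would lie in $\bigcap_r\phi^r(F_m)$ up to conjugacy. That intersection is contained in the terms of the derived series, which intersect trivially in a free group. Yet these loops must be nontrivial, because they map onto $K\neq1$ under $\rho$.

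The delicate part is base-point freedom: we must work with conjugacy classes and use the homotopy in a neighborhood rather than in $H_i$. I would handle this with Shrikhande's characterization, or by choosing a neighborhood basis cofinal with $\{H_i\}$ and arguing that a free homotopy of loops gives a conjugacy in $\pi_1H_i$.
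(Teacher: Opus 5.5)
There is a genuine gap at the first step, and it is exactly where the hypothesis you set aside is needed. Your $B_0\cong H_0\times I$ is not a $4$-ball.

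\begin{itemize}
\item A collar thickening of a genus-$m$ handlebody is $\natural^m S^1\times D^3$. It has $\pi_1\cong F_m$ with $m\geq1$, since $K\neq1$.
\item Its boundary is $\#^mS^1\times S^2$. The pair $\bigl(H_0\times0,\ \overline{\partial(H_0\times I)-H_0\times0}\bigr)$ is the doubled splitting of $\#^mS^1\times S^2$, not a Heegaard splitting of $S^3$.
\end{itemize}

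So the requirement $B_i\cong D^4$ already fails. Lemma~\ref{Lemma: rectangular word box} also cannot be transplanted, because it needs a homeomorphism of $(B_i;H_i,J_i)$ with the standard $(D^4;H,J)$. Concretely, placed in $H_0\times I$, the curves $C_j$ represent the words $w_j$ in $\pi_1(H_0\times I)=F_m$. These words are not all trivial, since otherwise $\rho=\rho\phi$ would be trivial. So the $C_j$ cannot all bound disks. Every later step that uses contractibility of $B_i$ or $O_i$ fails with this. That includes cell-likeness in (a), the local equivalence \eqref{Eq: local deletion equivalence}, and your van Kampen/Mayer--Vietoris argument for (d) over $U=(U-F)\cup O_0$.

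The missing idea is to build $B_0$ from the given disks.

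\begin{itemize}
\item Each $\ell_j$ is a product of conjugates of the classes $[\partial D_i]^{\pm1}$.
\item Take normal parallel copies of the framed disks and band-sum them along pairwise disjoint arcs in $Q$, pushing the band interiors into a collar. This gives pairwise disjoint framed proper disks bounded by curves representing $\ell_1,\ldots,\ell_m$.
\item The relative regular neighborhood of these disks, together with a connecting tree in $Q$, is a boundary connected sum of copies of $D^2\times D^2$. Hence it is a $4$-ball.
\item Its intersection with $Q$ is a handlebody $H_0$ realizing $\rho$. Its frontier $J_0$ is the complementary handlebody of the standard genus-$m$ splitting of $S^3$.
\end{itemize}

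The disks cannot be avoided. For any relative $4$-ball as in the statement, the splitting of $\partial B_0=S^3$ is standard (Waldhausen). So there are curves on $\partial H_0$, isotopic to the cores, bounding disjoint meridian disks of $J_0$. Pushed into $B_0$, by (b) these form exactly such a disk system.

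Once $B_0$ is built this way, the rest of your outline goes through:
\begin{itemize}
\item the telescope computations for (c) and for local acyclicity;
\item your derivation of (d) from the local statement by van Kampen and excision, which is shorter than the paper's route (a colimit of the groups $\pi_1(U-\operatorname{Int}_UB_i)$ plus Alexander--Lefschetz duality);
\item your argument for (e). Conjugates of elements of $\phi^k(F_m)$ stay in the normal subgroup $F_m^{(k)}$, and $\bigcap_kF_m^{(k)}=1$, while $\rho\phi^j(F_m)=K\neq1$.
\end{itemize}

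A small slip in (a): the \v{C}ech cohomology of $C$ is the direct limit of the groups $H^*(H_i)$, not an inverse limit.
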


\begin{proof}
Apply Lemma \ref{Lemma: finite commutator system} to $K$.  Since the
boundaries of the given disks normally generate $K$, normal parallel
copies and boundary bands produce pairwise disjoint framed proper disks
whose boundary classes, with the chosen base paths, are the elements
$\ell_1,\ldots,\ell_m$.  Indeed, represent the finitely many conjugating
paths by narrow pairwise disjoint arc cores in $Q$, attach boundary bands,
and push the interiors of the bands into a collar of $Q$.  Local twists of
the bands correct the framings.  Join regular neighborhoods of these disks
by a boundary tree and denote the resulting relative regular neighborhood
by $B_0$.  It is a $4$-ball.  The boundary piece
\[
        H_0=B_0\cap Q
\]
and the complementary frontier
\[
        J_0=\overline{\partial B_0-\operatorname{Int}H_0}
\]
form the standard genus-$m$ Heegaard splitting of $\partial B_0$.  The
ordered disks and chosen base paths identify $\pi_1H_0$ with $F_m$, and
under this identification the inclusion $H_0\hookrightarrow Q$ induces
\[
        F_m\xrightarrow{\rho}K\hookrightarrow\pi_1Q,
\]
where $\rho$ is the epimorphism defined in \eqref{Eq: rho phi}.

Transfer the construction of Lemma \ref{Lemma: rectangular word box} to
the Heegaard pair $(\partial B_0;H_0,J_0)$ with these ordered coordinates.
Let
\[
                 e:(B_0;H_0,J_0)\longrightarrow(B_1;H_1,J_1)
\]
be the resulting homeomorphism onto the inner relative ball.  Taking the
regular neighborhoods sufficiently thin, we may and do arrange
$B_1\subset\operatorname{Int}_U B_0$.  Define
\[
 B_i=e^i(B_0),\qquad H_i=e^i(H_0),\qquad J_i=e^i(J_0),
 \qquad
 Y_i=e^i\bigl(\overline{B_0-\operatorname{Int}B_1}\bigr).
\]
Transport the ordered free-group coordinates by $e$.  Under the resulting
identifications $\pi_1H_i\cong F_m$, the inclusion
$H_{i+1}\hookrightarrow H_i$ induces the endomorphism
$\phi:F_m\to F_m$ from \eqref{Eq: rho phi}.  Hence
\[
\begin{aligned}
 \operatorname{im}(\pi_1H_i\longrightarrow\pi_1Q)
   &=\operatorname{im}\bigl(
       F_m\xrightarrow{\phi^i}F_m\xrightarrow{\rho}K
       \hookrightarrow\pi_1Q\bigr)\\
   &=\rho\phi^i(F_m)=K,
\end{aligned}
\]
where the last equality follows from $\rho\phi^i=\rho$.  This proves
\eqref{Eq: external image K}.

We next calculate the fundamental group of $\operatorname{Int}_U B_i-F$.
For $j>i$, put
\[
 Z_{i,j}=\overline{B_i-\operatorname{Int}B_j}.
\]
For one shell, Lemma \ref{Lemma: rectangular word box} gives
\begin{equation}\label{Eq: shell alpha beta repeated}
 \pi_1J_{i+1}\xrightarrow{\cong}\pi_1Y_i,
 \qquad
 \pi_1J_i\longrightarrow\pi_1Y_i\text{ trivial}.
\end{equation}
Suppose that $\pi_1Z_{i,j}$ is identified with the free group coming from
the last shell.  Adding $Y_j$ along $J_j$ gives the pushout
\[
\begin{CD}
 \pi_1J_j @>{\cong}>> \pi_1Z_{i,j}\\
 @VV{0}V                  @VVV\\
 \pi_1Y_j @>>> \pi_1Z_{i,j+1}.
\end{CD}
\]
It follows that $\pi_1Z_{i,j+1}\cong\pi_1Y_j\cong F_m$, whereas the
homomorphism $\pi_1Z_{i,j}\to\pi_1Z_{i,j+1}$ is trivial.  Passing to the
direct limit gives the desired group.  More precisely, connected open
collar thickenings of the finite shells $Z_{i,j}$ are cofinal in
$\operatorname{Int}_UB_i-F$ and have the same fundamental groups and
bonding homomorphisms.  Hence
\begin{equation}\label{Eq: carrier direct limit}
 \pi_1(\operatorname{Int}_UB_i-F)
       \cong\underset{j>i}{\operatorname{colim}}\,\pi_1Z_{i,j}=1.
\end{equation}

The sets $H_i$ form a cofinal neighborhood system of $C$ in $Q$.  Since a
handlebody has the homotopy type of a finite bouquet of circles and
$\phi$ is zero on abelianization, continuity of \v{C}ech cohomology gives
\[
                  \widetilde{\check H}^{\,*}(C;G)=0
\]
for every abelian coefficient group $G$.  The continuum $C$ is connected
because it is the intersection of a nested sequence of connected compacta.
Similarly, strict nesting and compactness show that the relative balls
$B_i$ form a neighborhood basis of $F$ by contractible sets.  Hence $F$
has the shape of a point and is cell-like.  We do not need, and do not
assert, that $F$ is cellular in the ambient manifold.
This proves (a).

It remains to prove the global deletion equivalence
$U-F\longrightarrow U$.  Put
\[
        X_i=U-\operatorname{Int}_UB_i,
        \qquad G_i=\pi_1X_i.
\]
The intersection of $X_i$ and $B_i$ is a collar of $J_i$.  Since $B_i$ is
simply connected, the Seifert--van Kampen theorem gives
\begin{equation}\label{Eq: ambient kernel stage}
 G_i\twoheadrightarrow\pi_1U=C_2,
 \qquad
 \ker(G_i\to C_2)
   =\left\langle\!\left\langle
      \operatorname{im}\pi_1J_i
     \right\rangle\!\right\rangle.
\end{equation}
The shell $Y_i$ lies in $X_{i+1}$, and
\eqref{Eq: shell alpha beta repeated} kills every element of $\pi_1J_i$
there.  Hence the entire kernel in \eqref{Eq: ambient kernel stage} maps
trivially to $G_{i+1}$.  All maps commute with the fixed quotient to $C_2$,
and consequently
\begin{equation}\label{Eq: ambient pi1 limit}
             \pi_1(U-F)=\underset{i}{\operatorname{colim}}\,G_i
                \xrightarrow{\cong}C_2=\pi_1U.
\end{equation}
One may use connected open collar thickenings of the $X_i$ in this direct
limit; they have the same fundamental groups and their union is $U-F$.
Equation \eqref{Eq: ambient kernel stage} also shows directly that every
element in the kernel dies at the next stage.

Use the $C_2$-marking to take $R=\mathbb Z[C_2]$ coefficients, or
equivalently pass to the reference double cover.  The reference cover
splits over $F$.  Concretely, every simply connected ball $B_i$
lifts to two copies; equivalently,
$\check H^1(F;\mathbb Z/2)=0$.  Thus the lifted pair
$(\widetilde F,\widetilde C)$ is a disjoint union of two copies of $(F,C)$.
Strong excision and relative Alexander--Lefschetz duality
\cite[Cor.~16.19]{Bre97} give
\begin{equation}\label{Eq: relative duality deletion}
 H_k(U,U-F;R)
  \cong
 \check H^{4-k}(\widetilde F,\widetilde C;\mathbb Z)=0.
\end{equation}
The last equality follows from the long exact sequence of the pair, since
$F$ is cell-like and $C$ is connected and \v{C}ech-acyclic.  Equation \eqref{Eq: relative duality deletion} gives only the homology
statement; the independent fundamental-group calculation
\eqref{Eq: ambient pi1 limit} is needed to control $\pi_1$.

The map induced by \eqref{Eq: marked deletion equivalence} on universal
covers is now an integral homology equivalence.  Both spaces are ANRs and
have CW homotopy type, so the universal-cover version of Whitehead's
theorem proves \eqref{Eq: marked deletion equivalence}.  The same argument
inside $O_i$ gives
\[
 H_k(O_i,O_i-F;\mathbb Z)
   \cong\check H^{4-k}(F,C;\mathbb Z)=0.
\]
The space $O_i$ is contractible and
$\pi_1(O_i-F)=1$ by \eqref{Eq: carrier direct limit}.  Whitehead's theorem
therefore proves \eqref{Eq: local deletion equivalence}.

Finally, put
\begin{equation}\label{Eq: normal closure tower}
 N_r=\left\langle\!\left\langle
        \phi^r(F_m)
       \right\rangle\!\right\rangle_{F_m}.
\end{equation}
Then $N_{r+1}\subseteq N_r$, while
\begin{equation}\label{Eq: normal tower derived}
          N_r\subseteq F_m^{(r)},\qquad \rho(N_r)=K\ne1.
\end{equation}
Since a free group is residually solvable, this tower is not eventually
constant.  More precisely, if $C$ were nearly $1$-movable in the fixed
outer neighborhood $H_0$, some $H_j$ would work for every sufficiently
deep $H_k$.  The corresponding loop homotopies in $H_0$ would imply
\[
                  \phi^j(F_m)\subseteq N_k,
\]
and hence $N_j\subseteq N_k$.  The reverse inclusion follows from the
nesting, so $N_j=N_k$ for all sufficiently large $k$, a contradiction.
Thus $C$ is not nearly $1$-movable, completing the proof.
\end{proof}

Apply Proposition \ref{Prop: relative cap} to the disk system in
Proposition \ref{Prop: compact piece cocores}, with $U=A$.  From now on,
$B_i,H_i,F$ and $C$ will denote the resulting objects.

\subsection{The compact ANR quotient}

Collapse only the relative cap $F$ and write
\[
 q:A\longrightarrow Y=A/F,
 \qquad p=q(F),
 \qquad Z=q(Q)\cong Q/C.
\]

\begin{proposition}\label{Prop: compact quotient}
The space $Y$ is a compact ANR of covering dimension at most four, and
$Z$ is a $\mathcal{Z}$-set in $Y$.  Consequently,
\begin{equation}\label{Eq: W definition}
                W=Y-Z\cong A-(F\cup Q)
\end{equation}
has the $\mathcal{Z}$-compactification $Y=W\sqcup Z$.  Moreover, the
natural inclusion
\begin{equation}\label{Eq: W to A}
                         W\longrightarrow A
\end{equation}
is a marked homotopy equivalence.
\end{proposition}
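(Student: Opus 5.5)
The proof will rest on Proposition~\ref{Prop: relative cap}, applied with $U=A$.

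\emph{ANR and dimension.} Since $F$ is cell-like, the quotient $q:A\to Y$ is a cell-like map, with only one nondegenerate point inverse. Its source is the compact $4$-manifold with boundary $A$. The quotient $Y$ is compact metrizable, because $F$ is compact.

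For finite dimensionality, I would use the neighborhood basis of $p$ given by the images $q(B_i)$. Each $q(B_i)$ is the quotient of a ball by a cell-like set. A countable-sum or small-boundary argument then gives $\dim Y\le 4$: $Y-\{p\}$ is a $4$-manifold, and we add a single point.

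Cell-like maps with finite-dimensional targets preserve ANRs (Haver/Kozlowski). So $Y$ is a compact ANR. Alternatively, one can verify local contractibility directly at $p$ using the contractible sets $q(B_i)$.

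\emph{$\mathcal Z$-set property.} This is the step I expect to be the main obstacle, because of the point $p\in Z$. Away from $p$, $Z$ is a collared boundary of a manifold, so $U-Z\hookrightarrow U$ is a homotopy equivalence for small $U$. Since $Y$ is a compact ANR, the open-set definition is local, and it suffices to handle a basis of neighborhoods of $p$. I would take $\mathcal O_i=q(O_i)$, which is contractible because $O_i$ is contractible and $F$ is cell-like; more carefully, $\mathcal O_i$ is an ANR with a cell-like quotient map from $O_i$.

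Then $\mathcal O_i-Z\cong O_i-(F\cup Q)=O_i-F$, since $O_i=\operatorname{Int}_UB_i$ meets $Q$ only in $H_i$. Removing the collared boundary piece $H_i-C$ from $O_i-F$ is a homotopy equivalence. By \eqref{Eq: local deletion equivalence}, $O_i-F\simeq O_i$, which is contractible, so $\mathcal O_i-Z$ is contractible.

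Contractibility of these two sets alone does not suffice: the definition requires arbitrary open $U$. I would therefore invoke the standard equivalence for compact ANRs \cite{AG99} with the homotopy-negligibility formulation. To produce a homotopy $Y\times[0,1]\to Y$ that instantly pushes off $Z$, first push off $Z$ away from $p$ using the collar of $Q$. Near $p$, use the neighborhoods $\mathcal O_i$ and the equivalences $O_i-F\simeq O_i$ to build small maps of the neighborhoods into the complement, controlled by $i$.

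A cleaner route is to check $Z$ is a $\mathcal Z$-set via the criterion that, for each $\epsilon$, there is a map $Y\to Y-Z$ that is $\epsilon$-close to the identity. I would construct such maps by composing the push-in along the collar with a retraction of $q(B_i)$ into $q(B_i)-Z$. That retraction comes from $B_i-F\simeq B_i$, together with the fact that $B_i$ has a collar of $H_i$ in $B_i$. The remaining task is to check that this map is continuous at $p$ and moves points less than $\operatorname{diam} q(B_i)$.

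\emph{Marked homotopy equivalence.} $W=A-(F\cup Q)$, and removing the boundary collar $Q-C$ gives $W\simeq A-F$. Then \eqref{Eq: marked deletion equivalence} says $A-F\to A$ is a marked homotopy equivalence. The composite $W\to A-F\to A$ is the inclusion and preserves the marking, so $W\to A$ is a marked homotopy equivalence.
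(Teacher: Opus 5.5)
Your proposal is correct. The dimension and ANR steps are the paper's: the countable closed-sum theorem, then a cell-like image theorem (the paper cites Lacher where you cite Haver/Kozlowski). The $\mathcal Z$-set step and the final homotopy equivalence take a different route.

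\textbf{The $\mathcal Z$-set step.} The paper isolates the point $p$. Lacher's theorem makes $q|\colon O_i\to q(O_i)$ a proper homotopy equivalence, so by \eqref{Eq: local deletion equivalence} the inclusion $q(O_i)-\{p\}\to q(O_i)$ is a homotopy equivalence. Local contractibility of $Y$ turns this into the $k$-LCC property at $p$, and Henderson's criterion makes $p$ a $\mathcal Z$-point. For an open $U$ the paper then composes $U-Z\to U-\{p\}\to U$; the first map is a homotopy equivalence because $Z-\{p\}\cong Q-C$ is collared.

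You instead delete $F$ and $H_i$ at once, getting $q(B_i)-Z\cong B_i-(F\cup H_i)\simeq O_i-F\simeq O_i\simeq *$, and then use the $\epsilon$-approximation characterization of $\mathcal Z$-sets in compact ANRs. This avoids Lacher's proper homotopy equivalence theorem and Henderson's local criterion. The price is that you must build the approximating maps by hand. Two corrections for when you do:
\begin{itemize}
\item $O_i-(F\cup Q)$ equals $O_i-(F\cup H_i)$, not $O_i-F$. Your next sentence already compensates for this.
\item The ``retraction'' should be an extension. Take a collar push on $Y-q(\operatorname{Int}_AB_i)$ whose displacement is less than both $\operatorname{dist}(A-\operatorname{Int}_AB_i,B_{i+1})$ and $\operatorname{dist}(B_i,A-\operatorname{Int}_AB_{i-1})$. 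It misses $Z$ and carries $q(J_i)$ into $q(B_{i-1})-Z\cong B_{i-1}-(F\cup H_{i-1})$. That set is a contractible ANR, hence an AR, so the push on $q(J_i)$ extends over $q(B_i)$ with values in it.
\end{itemize}
Continuity at $p$ is then automatic, and the displacement is controlled by the size of the push and by $\operatorname{diam}q(B_{i-1})\to0$.

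\textbf{The marked homotopy equivalence.} Your argument is $W\simeq A-F$ by the collar of $Q-C$, followed by \eqref{Eq: marked deletion equivalence}. This is more direct than the paper's two-out-of-three argument, which combines the $\mathcal Z$-set property with Lacher's theorem that $q$ is a homotopy equivalence.
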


\begin{proof}
The decomposition defining $q$ has only one nondegenerate element, so its
equivalence relation is closed.  Therefore $Y$ is compact and metrizable.
Let $K_1\subset K_2\subset\cdots$ be a compact exhaustion of $A-F$.  The
sets $q(K_i)$ are closed in $Y$, each has covering dimension at most four,
and
\[
                         Y=\{p\}\cup\bigcup_iq(K_i).
\]
The countable closed-sum theorem for covering dimension
\cite{HW41} gives $\dim Y\leq4$.  The quotient map $q$ is cell-like.
Lacher's finite-dimensional image theorem
\cite[Cor.~3.3, p.~725]{Lac69} now implies that $Y$ is a compact ENR, and hence a
compact ANR.  Lacher's proper cell-like mapping theorem
\cite[Thm.~1.2 and Cor.~1.3, pp.~719--720]{Lac69} also shows that $q$ is a homotopy
equivalence.

We next prove that $p$ is a $\mathcal{Z}$-point.  For
$O_i=\operatorname{Int}_AB_i$, Proposition \ref{Prop: relative cap} gives
\[
                         O_i-F\xrightarrow{\simeq}O_i.
\]
The set $O_i$ is saturated for $q$.  In the commutative diagram
\[
\begin{CD}
 O_i-F @>>> O_i\\
 @V{\cong}VV @VV{q}V\\
 q(O_i)-\{p\} @>>> q(O_i),
\end{CD}
\]
the left vertical map is a homeomorphism, and, by Lacher's
Theorem~1.2(c), the right vertical map is a proper homotopy equivalence
\cite[pp.~719--720]{Lac69}.  It follows that the bottom map is a homotopy
equivalence.

We now pass from this neighborhood basis to arbitrary neighborhoods.  Since
$Y$ is an ANR, it is locally contractible.  Given an open neighborhood
$U$ of $p$ and $k\geq0$, choose $i$ and a neighborhood
$p\in V\subset q(O_i)\subset U$ such that $V\hookrightarrow q(O_i)$ is
null-homotopic.  Every map $S^k\to V-\{p\}$ is null-homotopic in
$q(O_i)$.  Since $q(O_i)-\{p\}\to q(O_i)$ is a homotopy equivalence, the
map is null-homotopic in $q(O_i)-\{p\}\subset U-\{p\}$.  Thus $\{p\}$ is
$k$-LCC for every $k$.  Henderson's local characterization
\cite[Thm.~I.1, pp.~206--208]{Hen75} shows that $p$ is a
$\mathcal{Z}$-point of $Y$.

After deleting $p$, the map $q$ is a homeomorphism and
\(Z-\{p\}\cong Q-C\).
The latter is a collared part of the boundary of the manifold $A-F$.
Pushing this boundary collar inward shows that deleting $Z-\{p\}$ from
any inverse-open set in $Y-\{p\}$ is a homotopy equivalence.  Successively
delete the collared set and the $\mathcal{Z}$-point $p$.  The same
inverse-open characterization gives
\(Z\in\mathcal{Z}(Y)\).
This proves the first assertion and \eqref{Eq: W definition}.

Since $Z$ is a $\mathcal{Z}$-set, $W\hookrightarrow Y$ is a homotopy
equivalence.  The quotient map $q:A\to Y$ is also a homotopy equivalence,
and the triangle formed by these two maps and the natural inclusion
$W\hookrightarrow A$ commutes.  The two-out-of-three property proves
\eqref{Eq: W to A}.  All maps preserve the fixed reference homomorphism to
$C_2$, so the equivalence is marked.
\end{proof}

\subsection{The $\mathcal{Z}$-boundary}

The compactification $Y$ is an ANR, but its $\mathcal{Z}$-boundary need
not be.  We now verify the asserted properties of $Z=Q/C$.

\begin{proposition}\label{Prop: non-ANR homology boundary}
The space $Z=Q/C$ is an integral homology $3$-manifold which is not an
ANR.
\end{proposition}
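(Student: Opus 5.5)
We prove the two assertions separately: first that $Z$ has the local homology of $\mathbb R^3$, then that it is not an ANR. Away from the point $z_0=q(C)$ the space $Z-\{z_0\}\cong Q-C$ is an open $3$-manifold, so the local homology there is standard. Everything is therefore concentrated at $z_0$.

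\textbf{Local homology at $z_0$.} We must show that
\[
H_k(Z,Z-\{z_0\};\mathbb Z)\cong H_k(\mathbb R^3,\mathbb R^3-0;\mathbb Z)
\]
for all $k$.
\begin{enumerate}[(1)]
\item First we check that $Z$ is locally compact metric and finite-dimensional, by the countable closed-sum argument of Proposition~\ref{Prop: compact quotient}. This allows us to use sheaf-theoretic (Čech) homology and compare it with singular homology where necessary.
\item By excision,
\[
H_k(Z,Z-\{z_0\})\cong H_k(q(\operatorname{Int}H_0),\,q(\operatorname{Int}H_0)-z_0).
\]
The open set $\operatorname{Int}H_0-C$ is homeomorphic to $q(\operatorname{Int}H_0)-z_0$.
\item For the Čech/sheaf version, we use the fact that $q|_Q:Q\to Z$ is a map with a single acyclic nondegenerate fibre. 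Part (a) of Proposition~\ref{Prop: relative cap} says $C$ is Čech-acyclic with all constant coefficients. The Vietoris--Begle theorem then gives
\[
\check H^*(Z,Z-z_0)\cong \check H^*(Q,Q-C).
\]
Alexander--Lefschetz duality in the $3$-manifold $Q$ gives
\[
H_{3-*}(Q-\text{?})\dots \quad\text{i.e.}\quad \check H^{*}(Q,Q-C)\cong H_{3-*}(C)=H_{3-*}(\mathrm{pt}),
\]
which is the local cohomology of $\mathbb R^3$.
\item The main obstacle is that the definition in the paper requires \emph{singular} local homology, and $Z$ is not locally contractible at $z_0$. My plan here is to compute the singular groups directly. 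Since $z_0$ has arbitrarily small open neighborhoods, we have
\[
H_k(Z,Z-z_0)\cong \widetilde H_{k-1}(N-z_0)\ \text{corrected by}\ \widetilde H_k(N)
\]
for a suitable neighborhood $N$, via the long exact sequence of the pair. The deleted neighborhood $N-z_0\cong \operatorname{Int}H_0-C$ is a manifold. Its singular homology equals that of $\partial H_0\times\mathbb R$, by duality in $Q$ together with acyclicity of $C$; in particular its $\widetilde H_*$ is concentrated in degree $2$ and is $\mathbb Z$ there. I would then show that the singular homology of $q(\operatorname{Int}H_0)$ vanishes. The idea is to write it as the increasing union of the sets $q(H_i)$, which have the homotopy type of finite wedges of circles, with bonding maps $\phi$ that are zero on $H_1$. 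Every singular cycle is compact, so it either misses $z_0$ or is controlled through these cofinal stages. Combining the two computations in the exact sequence gives $\mathbb Z$ in degree $3$ and $0$ elsewhere. If the direct singular argument breaks down, I would fall back on the fact that $Z$ is a $\mathcal{Z}$-set boundary of the ANR $Y$ and compute singular groups through $Y$ using Lacher's theorem.
\end{enumerate}

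\textbf{Not an ANR.} Suppose, for a contradiction, that $Z$ were an ANR. Then $Z$ would be locally contractible at $z_0$, so $z_0$ would have a basis of neighborhoods that contract in larger neighborhoods.
\begin{itemize}
\item Pull such neighborhoods back along $q|_Q$. This yields neighborhoods $H_j\subset H_0$ of $C$ in which loops can be deformed through shrinking neighborhoods, up to the single collapsed point.
\item Loops in $Q-C$ near $C$ map to loops in $Z-z_0$. A homotopy in $Z$ passing through $z_0$ lifts, after general position (the preimage of $z_0$ is $C$, and we can push off using small neighborhoods $H_k$), to a homotopy in $H_0$ into arbitrarily small $H_k$.
\item This is exactly near $1$-movability of $C$, which contradicts Proposition~\ref{Prop: relative cap}(e).
\end{itemize}
The delicate point in this part is the lifting step, which I would handle by replacing $z_0$ with $q(H_k)$ for $k$ large.
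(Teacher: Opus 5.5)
\textbf{Local homology.} Your step (3) is essentially the paper's argument. The paper phrases it in homology: $H_i(Z,Z-\{p\})\cong H_i(Q,Q-C)\cong\check H^{3-i}(C)$, via strong excision and Alexander--Lefschetz duality from Bredon's sheaf theory, and it does no separate singular computation. The gap is your step (4), which you treat as necessary and which fails as written.

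(i) $\widetilde H_*(\operatorname{Int}H_0-C)$ is not concentrated in degree $2$. Since $H_i(\operatorname{Int}H_0,\operatorname{Int}H_0-C)\cong\check H^{3-i}(C)=0$ for $i\le2$, one gets $H_1(\operatorname{Int}H_0-C)\cong H_1(\operatorname{Int}H_0)\cong\mathbb Z^m$ and $H_2\cong\mathbb Z$. The model $\partial H_0\times\mathbb R$, with $H_1\cong\mathbb Z^{2m}$, is also wrong.

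(ii) So $\widetilde H_*(N)=0$ is the wrong target: it would give $H_2(N,N-z_0)\cong H_1(N-z_0)\cong\mathbb Z^m\neq0$. What would suffice, by the five lemma against the manifold pair $(\operatorname{Int}H_0,\operatorname{Int}H_0-C)$, is that $q_*\colon H_*(\operatorname{Int}H_0)\to H_*(N)$ be an isomorphism in \emph{singular} homology. That is a singular Vietoris--Begle statement for a map whose nondegenerate fibre is acyclic but not cell-like, at a point where $Z$ is not locally simply connected.

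(iii) Your mechanism for this is not available. The $H_i$ decrease, so $N$ is not an increasing union of the $q(H_i)$. $H_i$ is homotopy equivalent to a wedge of circles, but nothing gives this for $q(H_i)=H_i/C$. Compactness of singular chains does not reduce to finitely many stages either: one singular simplex can traverse infinitely many shells accumulating at $z_0$ (e.g.\ an infinite concatenation of loops $\gamma_k\subset q(H_k)$). This is exactly the Hawaiian-earring situation where singular homology is not continuous.

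(iv) The fallback through $Y$ does not apply. Lacher's theorem concerns the cell-like map $A\to Y$, whereas $Q\to Z$ has the non-cell-like fibre $C$, and $Z$ is not an ANR.

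So either work, as the paper does, in the sheaf-theoretic setting where strong excision holds, in which case (3) suffices, or give a genuinely new singular argument; (4) is not one.

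\textbf{Not an ANR.} The paper transplants $(H_0,C)$ into $\mathbb R^3$ and cites Shrikhande's Theorem~2.2. You instead reprove that implication directly in $Q$, which is a reasonable alternative, but your lifting step does not produce near $1$-movability as defined.

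Choose $j$ so that loops in $q(\operatorname{Int}H_j)$ are null-homotopic in $q(\operatorname{Int}H_0)$. Let $\gamma\subset\partial H_j$ represent an element of $\phi^j(F_m)$, and let $g\colon D^2\to q(\operatorname{Int}H_0)$ be a null-homotopy of $q\gamma$. Let $P\subset\operatorname{int}D^2$ be a compact neighbourhood of $g^{-1}(z_0)$ inside $g^{-1}(q(\operatorname{Int}H_k))$. Then $q^{-1}g$ on the planar surface $D^2-\operatorname{int}P$ shows only that $\gamma$ is homotopic in $H_0$ to a product of conjugates of \emph{several} loops in $H_k$, not to a single loop in $H_k$.

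That is nevertheless enough if you bypass near $1$-movability and use the tower directly. Since $N_k$ in \eqref{Eq: normal closure tower} is a normal closure, you get $\phi^j(F_m)\subseteq N_k$ for every $k$. Hence $\phi^j(F_m)\subseteq\bigcap_kF_m^{(k)}=1$, contradicting $\rho\phi^j(F_m)=K\neq1$. With this adjustment your route gives a self-contained proof that avoids both Shrikhande's theorem and the Euclidean transplant. As written, ``replace $z_0$ by $q(H_k)$'' only gestures at it.
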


\begin{proof}
The normal subgroups $N_r$ in \eqref{Eq: normal closure tower} satisfy
\[
 N_{r+1}\subseteq N_r,\qquad
 N_r\subseteq F_m^{(r)},\qquad
 \rho(N_r)=K\ne1.
\]
The last paragraph of the proof of Proposition
\ref{Prop: relative cap} shows, using residual solvability of $F_m$, that
$C$ is not nearly $1$-movable.

Embed the standard handlebody $H_0$ in $\mathbb R^3$.  The same nested
handlebodies $H_i$ and the normal-closure calculation
\eqref{Eq: normal tower derived} show that $C$ is not nearly
$1$-movable in this Euclidean embedding.  By the contrapositive of Shrikhande's Theorem~2.2
\cite[pp.~121--123]{Shr83}, the implication
\[
 \mathbb R^3/C \text{ locally simply connected at the collapsed point}
 \quad\Longrightarrow\quad
 C\text{ nearly }1\text{-movable}
\]
shows that $\mathbb R^3/C$ is not locally simply connected at the
point obtained by collapsing $C$.

Choose a neighborhood of $C$ whose closure lies in
$\operatorname{Int}H_0$.  Since the same pair $(H_0,C)$ occurs in both
the chosen embedding $H_0\subset\mathbb R^3$ and the original
inclusion $H_0\subset Q$, collapsing $C$ gives homeomorphic
neighborhoods of the collapsed point in $\mathbb R^3/C$ and in $Q/C$.
Under the identification \(Z\cong Q/C\),
the point obtained by collapsing $C$ corresponds to $p$.  Therefore
$Z$ is not locally simply connected at $p$.  Since metrizable ANRs
are locally contractible, $Z$ is not an ANR.

It remains to calculate the local homology.  Strong excision and
Alexander--Lefschetz duality \cite[Cor.~16.19]{Bre97} give
\[
\begin{aligned}
 H_i(Z,Z-\{p\};\mathbb Z)
 &\cong H_i(Q,Q-C;\mathbb Z)\\
 &\cong \check H^{3-i}(C;\mathbb Z).
\end{aligned}
\]
Since $C$ is connected and \v{C}ech-acyclic, the last group is
$\mathbb Z$ when $i=3$ and is zero otherwise.  Every point of
$Z-\{p\}\cong Q-C$ has an unchanged $3$-manifold neighborhood.
Therefore $Z$ is an integral homology $3$-manifold.
\end{proof}

\begin{proposition}\label{Prop: boundary shape}
The $\mathcal{Z}$-boundary and its connected reference double cover satisfy
\begin{equation}\label{Eq: two shape identities}
             \Sh(Q/C)=\Sh(\mathbb{R}P^3),
             \qquad
             \Sh(\widetilde Z)=\Sh(S^3).
\end{equation}
\end{proposition}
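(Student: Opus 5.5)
The plan is to compute the shape of $Z=Q/C$ through an explicit inverse sequence, and then compare it with $\mathbb{R}P^3$ by a marked degree-one map.

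\textbf{Inverse-sequence model.} Since $C=\bigcap_iH_i$ with $H_{i+1}\subset\operatorname{Int}H_i$, we have
\[
Z=Q/C\cong\varprojlim_i\,Q/H_i,
\]
where the bonding maps $Q/H_{i+1}\to Q/H_i$ collapse $H_i/H_{i+1}$ to the point. Each $Q/H_i$ is a compact polyhedron homotopy equivalent to $Q\cup\operatorname{cone}(H_i)$. Because $H_i$ is a genus-$m$ handlebody, this is $Q$ with $m$ $2$-cells attached along the core loops. By Proposition~\ref{Prop: relative cap}(b), these loops have images $\rho\phi^i(x_j)$ that normally generate $K$ (indeed their images generate $K$). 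Hence $\pi_1(Q/H_i)\cong\pi_1Q/K\cong C_2$, compatibly with $\kappa$, and every bonding map is an isomorphism on $\pi_1$. So $\operatorname{pro}\text{-}\pi_1(Z)$ is stably $C_2$ and $Z$ carries a marking. The reference double cover is
\[
\widetilde Z=\varprojlim_i\,\Sigma/(\widetilde H_i),
\]
where $\widetilde H_i$ is the union of the two disjoint lifts of $H_i$. Each level is simply connected. Its homology is that of $\Sigma$ (a homology sphere) plus a summand $H_2\cong\mathbb Z^{2m}$, coming from the attached cells, whose boundaries are null-homologous in $\Sigma$. The bonding maps act on this extra $H_2$ through the map $H_1(H_{i+1})\to H_1(H_i)$, which is $\phi^{ab}=0$. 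Therefore the pro-homology of $\widetilde Z$ is pro-isomorphic to $H_*(S^3)$, and $\widetilde Z$ is pro-simply connected.

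\textbf{Comparison map.} Take the equivariant degree-one map $\Sigma\to S(V)$ of Kwasik--Schultz and pass to its quotient $g\colon Q\to\mathbb{R}P^3$, which is marked. The restriction of $g$ to $H_0$ induces on $\pi_1$ a map $F_m\xrightarrow{\rho}K\to\pi_1\mathbb{R}P^3$, and this is trivial because $K=\ker\kappa$. Since $H_0$ has the homotopy type of a wedge of circles, $g|_{H_0}$ is null-homotopic. By homotopy extension we may therefore homotope $g$ to be constant on $H_0$. Then $g$ factors as
\[
Q\longrightarrow Q/C\xrightarrow{\;\bar g\;}\mathbb{R}P^3,
\]
with $\bar g$ marked. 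Its lift $\tilde g\colon\widetilde Z\to S^3$ is of degree one. On each level $\Sigma/\widetilde H_i$ the lift sends the extra $H_2$ to $0$, and it is an isomorphism on $H_3$ and $H_0$. Consequently $\tilde g$ induces an isomorphism of pro-homology.

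\textbf{Shape Whitehead step and main obstacle.} Both $\widetilde Z$ and $S^3$ are finite-dimensional compacta that are pro-simply connected, and $\tilde g$ is a pro-homology isomorphism. The shape-theoretic Whitehead theorem (Mardešić--Segal, or Dydak's version for finite-dimensional compacta) then shows that $\tilde g$ is a shape equivalence. This gives $\Sh(\widetilde Z)=\Sh(S^3)$. For $Z$ itself, $\bar g$ induces a pro-isomorphism $\operatorname{pro}\text{-}\pi_1Z\cong C_2=\pi_1\mathbb{R}P^3$. On the double covers it induces pro-isomorphisms on all higher pro-homotopy groups, since $\tilde g$ is a shape equivalence and higher pro-homotopy is unchanged by passing to covers. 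The pro-Whitehead theorem, applied to the finite-dimensional compactum $Z$, then gives $\Sh(Z)=\Sh(\mathbb{R}P^3)$.

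I expect the main obstacle to be this last step: making the cover-level argument rigorous in pro-categories. Concretely, one must check that the levelwise double covers of the $Q/H_i$ form the inverse sequence of $\widetilde Z$, so that pro-$\pi_k$ of $Z$ and of $\widetilde Z$ agree for $k\ge2$. Alternatively, one can verify the hypotheses of the Whitehead theorem directly, via pro-homology with $R=\mathbb Z[C_2]$ coefficients together with the stable $\pi_1$ isomorphism.
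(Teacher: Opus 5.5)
Your proposal is correct and follows the paper's proof essentially step for step. It uses the same expansion $\{Q/H_i\}$ and the same levelwise double covers $\Sigma/(H_i^+,H_i^-)$. It uses the Kwasik--Schultz degree-one map homotoped to be constant on $H_0$, and the vanishing of the bonds on the extra $H_2\cong\mathbb Z^{2m}$ because $\phi_{\mathrm{ab}}=0$. It ends with the same pointed pro-Whitehead step (Edwards--Geoghegan), descending from the cover to $Z$ using $\operatorname{pro}\text{-}\pi_1=C_2$.

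The only difference is at the cover level. The paper avoids a homological Whitehead theorem: it splits $\widetilde K_i\simeq S^3\vee\bigvee^{2m}S^2$ and writes down an explicit pro-inverse $s_i=\widetilde b_i t_{i+1}$ to the degree-one maps. You instead cite the homological Whitehead theorem for pro-simply connected finite-dimensional towers. Both routes work.

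The point you flag as the main obstacle is routine. The levelwise covering projections induce isomorphisms on $\pi_k$ for $k\ge2$, naturally in the bonding maps.
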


\begin{proof}
Put
\[
 K_i=Q/H_i,
 \qquad
 b_i:K_{i+1}\longrightarrow K_i.
\]
The quotient maps $Q\to Q/H_i$ give the standard inverse-system
expansion of $Q/C$ used in shape theory; thus $\{K_i,b_i\}$ represents the
shape of $Q/C$.  The preimage
of $H_i$ in $\Sigma$ is a disjoint union $H_i^+\sqcup H_i^-$, since the
image of $\pi_1H_i$ is contained in the covering subgroup $K$.  Define
\[
 \widetilde K_i=\Sigma/(H_i^+,H_i^-),
\]
where the two handlebodies are collapsed to two distinct points.  These spaces, with the lifted bonding maps, similarly represent the
shape of $\widetilde Z$.

Kwasik and Schultz construct an equivariant degree-one map
\(f:\Sigma\to S^3.
\)
See \cite[proof of Thm.~2.1, p.~448]{KS88}.
It descends to a degree-one map
\(\overline f:Q\to\mathbb{R}P^3
\)
which induces $\kappa$ on fundamental groups.  The restriction
$\overline f|H_0$ is null-homotopic: the handlebody $H_0$ has the homotopy
type of a finite graph, and $\overline f$ kills the image of its
fundamental group.  Since $(Q,H_0)$ is a cofibration pair, the homotopy
extension property permits us to assume that $\overline f$ is constant on
$H_0$.  It therefore factors into compatible maps
\[
 \overline f_i:K_i\longrightarrow\mathbb{R}P^3,
 \qquad
 \overline f_i b_i=\overline f_{i+1}.
\]
Their lifts are compatible degree-one maps
\(f_i:\widetilde K_i\to S^3\).

Let $m$ be the genus of $H_i$.  Collapsing the two components
$H_i^+\sqcup H_i^-$ separately gives, in dimensions at least two, the
relative sequence of the pair
$(\Sigma,H_i^+\sqcup H_i^-)$.  Since $\Sigma$ is a homology $3$-sphere and
each $H_i^\pm$ is a genus-$m$ handlebody, this sequence gives
\[
 \pi_1\widetilde K_i=1,\qquad
 H_3(\widetilde K_i;\mathbb Z)=\mathbb Z,\qquad
 H_2(\widetilde K_i;\mathbb Z)=\mathbb Z^{2m},\qquad
 H_1(\widetilde K_i;\mathbb Z)=0.
\]
Indeed, the relative boundary homomorphism identifies
\[
 H_2(\widetilde K_i;\mathbb Z)
   \cong H_1(H_i^+;\mathbb Z)\oplus H_1(H_i^-;\mathbb Z),
\]
while the fundamental class of $\Sigma$ gives $H_3(\widetilde K_i)=\mathbb Z$.
For the fundamental-group assertion, observe that the image of either
lifted handlebody normally generates $\pi_1\Sigma=K$; van Kampen's theorem
therefore gives $\pi_1\widetilde K_i=1$ and hence
$H_1(\widetilde K_i)=0$.  Naturality identifies the homomorphism induced
by the lifted bond on $H_2$ with the direct sum of the maps
\[
       H_1(H_{i+1}^\pm)\longrightarrow H_1(H_i^\pm).
\]
Both maps are $\phi_{\mathrm{ab}}=0$.  Therefore the lifted bond is zero on
$H_2$ and has degree one on $H_3$.

Each $\widetilde K_i$ is a simply connected finite $3$-complex.
Whitehead's certain exact sequence makes
$\pi_3\widetilde K_i\to H_3\widetilde K_i$ onto.  Choose a map
$t_i:S^3\to\widetilde K_i$ representing the fundamental class and maps of
$2$-spheres representing a basis of $H_2$.  Their wedge is a homology
equivalence between simply connected complexes, and hence
\begin{equation}\label{Eq: lifted stage wedge}
              \widetilde K_i\simeq
                  S^3\vee\bigvee^{2m}S^2.
\end{equation}
Orient $t_i$ so that $f_it_i$ has degree one and is homotopic to the
identity of $S^3$.  Define
\[
                 s_i=\widetilde b_i t_{i+1}:
                       S^3\longrightarrow\widetilde K_i.
\]
The maps $\widetilde b_i$ and $s_if_{i+1}$ agree on the $S^3$ summand in
\eqref{Eq: lifted stage wedge}.  They are both null-homotopic on every
$S^2$ summand: for the first map this follows from the vanishing on $H_2$
and the Hurewicz theorem, while the second assertion follows from
$\pi_2S^3=0$.  Thus
\[
      \widetilde b_i\simeq s_if_{i+1},
      \qquad f_is_i\simeq\operatorname{Id}_{S^3},
      \qquad \widetilde b_i s_{i+1}\simeq s_i.
\]
For the last identity, $t_{i+1}$ and
$\widetilde b_{i+1}t_{i+2}$ represent the same class in
$H_3(\widetilde K_{i+1})$.  Their difference in $\pi_3$ lies in the
Whitehead term generated by $\pi_2\widetilde K_{i+1}$.  Naturality of
Whitehead's sequence and the fact that $\widetilde b_i$ kills $\pi_2$
show that $\widetilde b_i$ kills this difference.  Hence
$\widetilde b_is_{i+1}\simeq s_i$.  Together these identities show, after passing one stage when necessary,
that the maps $f_i$ and $s_i$ define mutually inverse pro-homotopy
equivalences between $\{\widetilde K_i\}$ and the constant $S^3$ system.

Finally, every $\overline f_i$ induces an isomorphism on fundamental
groups.  In dimensions at least two, its map on pro-homotopy groups is
identified with the lifted pro-equivalence.  The pointed pro-complex
Whitehead theorem \cite[Thm.~3.1]{EG75} makes
$\{\overline f_i\}$ a pro-homotopy equivalence to the constant
$\mathbb{R}P^3$ system.  This proves \eqref{Eq: two shape identities}.
\end{proof}

\section{Identification with the Kwasik--Schultz end}

\subsection{The weak collar of the auxiliary end}

Let $R=\mathbb Z[C_2]$.  Homology with coefficients in $R$ will always
refer to the fixed homomorphism to $C_2$; equivalently, it is ordinary
integral homology in the reference double cover.

Choose numbers $\epsilon_i\searrow0$.  After rounding corners, put
\[
 \mathcal P_i=B_i\cup\operatorname{collar}_{\epsilon_i}(Q),
 \qquad
 E_i=\mathcal P_i-(F\cup Q).
\]
The collar is chosen so that the sets $E_i$ are nested and cofinal among
the neighborhoods of the deleted set $F\cup Q$.  Indeed, strict nesting of
the $B_i$ and the shrinking collar widths imply that $A-\mathcal P_i$ has
compact closure in $W$ and that the $E_i$ are cofinal.  The set $O_i-F$ is connected by
Proposition \ref{Prop: relative cap} and meets the connected collar part
$Q\times(0,\epsilon_i]$.  Thus every $E_i$ is connected, and $W$ has one
end.

\begin{proposition}\label{Prop: cofinal end system}
For every $i$, the inclusion $E_i\hookrightarrow\mathcal P_i$ is a marked
homotopy equivalence and
\[
                         \pi_1\mathcal P_i\cong C_2.
\]
Under these identifications, the $C_2$-marked pro-fundamental group of the end
of $W$ is
\begin{equation}\label{Eq: stationary end pi1}
 C_2\xleftarrow{\operatorname{Id}}C_2
     \xleftarrow{\operatorname{Id}}C_2
     \xleftarrow{\operatorname{Id}}\cdots.
\end{equation}
The end is tame in the sense of Freedman and Quinn and its Wall finiteness
obstruction at infinity vanishes.
\end{proposition}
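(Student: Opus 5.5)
The plan is to treat $\mathcal P_i$ as a compact codimension-zero piece of $A$ and to prove the deletion equivalence $E_i\hookrightarrow\mathcal P_i$ by the same pattern as Proposition \ref{Prop: relative cap}(d): an independent fundamental-group computation, then $R$-coefficient homology via duality, then Whitehead's theorem on reference covers.

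\textbf{Fundamental group of $\mathcal P_i$.} After rounding corners, $\mathcal P_i$ deformation retracts onto $B_i\cup\operatorname{collar}(Q)\simeq Q\cup_{H_i}B_i$. Since $B_i$ is a $4$-ball, van Kampen shows that
\[
\pi_1\mathcal P_i\cong\pi_1Q/\langle\!\langle\operatorname{im}\pi_1H_i\rangle\!\rangle .
\]
By \eqref{Eq: external image K}, the image of $\pi_1H_i$ is exactly $K$, so $\pi_1\mathcal P_i\cong\pi_1Q/K\cong C_2$. The marking is the one induced by $\kappa$, and inclusions $\mathcal P_{i+1}\subset\mathcal P_i$ commute with the maps to $C_2$. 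Hence the bonds are the identity.

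\textbf{Fundamental group of $E_i$.} Write
\[
E_i=(O_i-F)\cup\bigl(Q\times(0,\epsilon_i]-F\bigr)
\]
up to corner rounding. The intersection is $(H_i-C)\times(0,\epsilon]$, or more precisely a collar of the frontier. Since $\pi_1(O_i-F)=1$ by Proposition \ref{Prop: relative cap}(c), van Kampen gives
\[
\pi_1E_i\cong\pi_1(Q-C)/\langle\!\langle\operatorname{im}\pi_1(H_i-C)\rangle\!\rangle .
\]
A cleaner route avoids analysing $Q-C$ directly. Consider $E_i\cup(\text{collar of }Q)$ with $Q$ added back, that is $\mathcal P_i-F$. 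Adding a boundary collar does not change the homotopy type, so $E_i\simeq\mathcal P_i-F$. Then $\mathcal P_i-F=(O_i-F)\cup(\text{collar}-F)$, and I would repeat the direct-limit argument \eqref{Eq: ambient kernel stage}--\eqref{Eq: ambient pi1 limit} with $U$ replaced by $\mathcal P_i$. Each $\mathcal P_i-\operatorname{Int}B_j$ has $\pi_1$ surjecting onto $C_2$, with kernel normally generated by $\pi_1J_j$, and this kernel dies in the next shell by \eqref{Eq: shell alpha beta repeated}. So $\pi_1(\mathcal P_i-F)\to\pi_1\mathcal P_i=C_2$ is an isomorphism.

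\textbf{Homology and conclusion.} For homology, the reference cover splits over $F$, and Alexander--Lefschetz duality \cite[Cor.~16.19]{Bre97} gives
\[
H_*(\mathcal P_i,\mathcal P_i-F;R)\cong\check H^{4-*}(\widetilde F,\widetilde C)=0,
\]
exactly as in \eqref{Eq: relative duality deletion}. Whitehead's theorem on universal (equal to reference) covers then yields the marked homotopy equivalence $E_i\simeq\mathcal P_i-F\simeq\mathcal P_i$. The pro-$\pi_1$ is therefore the stationary identity system \eqref{Eq: stationary end pi1}. Base-ray issues are harmless because the bonds are isomorphisms onto an abelian group.

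\textbf{Tameness and Wall obstruction.} Each $\mathcal P_i$ is a compact manifold, hence homotopy equivalent to a finite complex. So each $E_i$ is finitely dominated with vanishing Wall obstruction, and the end is inward tame; this is the expected main subtlety. To produce the Freedman--Quinn proper map $h:E_i\times(0,1]\to W$, I would use the $\mathcal Z$-compactification of Proposition \ref{Prop: compact quotient}. Since $Y$ is an ANR and $Z$ is a $\mathcal Z$-set, there is a homotopy $Y\times[0,1]\to Y$ starting at the identity and pushing $Y$ off $Z$ instantly. Composing its reverse with suitable shrinking yields a proper map of a neighborhood of the end into $W$ that sweeps toward infinity, which is the definition of tameness. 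Alternatively, stable $\pi_1$ together with finite domination gives tameness by \cite[11.9A]{FQ90}. Finally, $\sigma_\infty$ vanishes because every $E_i$ has the finite type of $\mathcal P_i$, so the obstruction is represented by $[\mathcal P_i]=0$ in $\widetilde K_0(\mathbb Z[C_2])$.
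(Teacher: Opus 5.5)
Your argument is essentially the paper's proof: van Kampen with $\operatorname{im}(\pi_1H_i\to\pi_1Q)=K$ gives $\pi_1\mathcal P_i\cong C_2$, the collar push gives $E_i\simeq\mathcal P_i-F$, and the shell-by-shell direct-limit argument computes $\pi_1(\mathcal P_i-F)$. Alexander--Lefschetz duality in the split reference cover, Whitehead's theorem, and the finite type of $\mathcal P_i$ then give inward tameness and $\sigma_\infty=0$.

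The one piece to discard is the $\mathcal{Z}$-push route to Freedman--Quinn tameness. A $\mathcal{Z}$-set homotopy pulls neighborhoods of infinity inward, which is only inward tameness, and it cannot in general be turned into a proper push to infinity. For example, the telescope of degree-two self-maps of $S^1$ is $\mathcal{Z}$-compactified by the dyadic solenoid, yet its pro-$\pi_1$ is not stable, so it is not tame in this sense. Your alternative, inward tameness together with the stationary finitely presented pro-$\pi_1$ via Freedman--Quinn 11.9A, is correct and is exactly what the paper uses.
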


\begin{proof}
The space $\mathcal P_i$ is obtained, up to rounding, by attaching the simply
connected ball $B_i$ to a collar of $Q$ along $H_i$.  The Seifert--van
Kampen theorem and \eqref{Eq: external image K} give
\[
 \pi_1\mathcal P_i
 \cong
 \pi_1Q/\left\langle\!\left\langle
       \operatorname{im}\pi_1H_i
       \right\rangle\!\right\rangle
 \cong \pi_1Q/K\cong C_2.
\]

The fundamental-group calculation from Proposition
\ref{Prop: relative cap} also applies to $\mathcal P_i$.  More explicitly, exhaust $\mathcal P_i-F$ by the complements
of the deeper balls $B_j$, where $j>i$.  The kernel of the map to $C_2$ at stage $j$ is normally generated
by the image of $\pi_1J_j$, and the next shell kills all of them.  Therefore
$\pi_1(\mathcal P_i-F)\to\pi_1\mathcal P_i$ is an isomorphism.  The reference cover splits
over $F$, and the same relative Alexander--Lefschetz duality calculation
as in \eqref{Eq: relative duality deletion} proves that this map is an
$R$-homology equivalence.  Whitehead's theorem now shows that
$\mathcal P_i-F\hookrightarrow\mathcal P_i$ is a marked homotopy equivalence.  In
$\mathcal P_i-F$, the set $Q-C$ is collared.  Pushing this collar inward gives
\(E_i\xrightarrow{\simeq}\mathcal P_i\).
The inclusion $E_{i+1}\hookrightarrow E_i$ preserves the fixed
$C_2$-marking.  Since the induced maps
$\pi_1E_i\to C_2$ are isomorphisms, every bonding homomorphism is the
identity after these identifications.  This proves \eqref{Eq: stationary end pi1}.

Each $\mathcal P_i$ is a compact ANR of finite CW type, and hence the same is true
up to homotopy for $E_i$.  Thus the end is inward tame.  Together with the
stationary finitely presented pro-fundamental group
\eqref{Eq: stationary end pi1}, Quinn's equivalence of tameness
conventions in \cite[Section~11.9A]{FQ90} makes the end tame in the sense
used in Section~11.9B of that book.  The finite CW models have zero Wall
finiteness obstruction, so the stable end obstruction is zero.
\end{proof}

The group $C_2$ is good in the sense of Freedman and Quinn
\cite[pp.~103, 210]{FQ90}.  The weak end theorem
\cite[Section~11.9B]{FQ90} therefore gives a closed cofinal
weak-collar neighborhood
\begin{equation}\label{Eq: weak collar V}
 V\subset W,
 \qquad N=\partial V,
 \qquad H_*(V,N;R)=0.
\end{equation}
Choose $V$ beyond the stable stage and disjoint from the compact boundary
$P_0$.  The last equality in \eqref{Eq: weak collar V} is the weak-collar
observation immediately preceding that theorem, while
\cite[Section~11.9C(1)]{FQ90} shows that its boundary data
consist of an epimorphism
\(\pi_1N\twoheadrightarrow C_2
\)
with perfect kernel.

\subsection{The complementary cobordism and the actual Kwasik--Schultz end}

We next compare the auxiliary end with the actual Kwasik--Schultz end.  The
first step allows the cap to remain in the signature-bearing compact piece $A$ rather
than in a periodic block.

\begin{proposition}
\label{Prop: complementary cobordism}
After a bicollar adjustment, put
\[
        K_c=\overline{W-V}^{\,W},
        \qquad L=\overline{A-K_c}^{\,A}.
\]
Then $L$ is an ordinary compact topological $4$-manifold cobordism with
\begin{equation}\label{Eq: L boundary}
                         \partial L=N\sqcup Q.
\end{equation}
Both boundary pairs are $R$-acyclic:
\[
                         H_*(L,N;R)=H_*(L,Q;R)=0.
\]
Thus $L$ is an $R$-homology $h$-cobordism of the weak-collar data
$(N\to C_2)$ and $(Q\xrightarrow{\kappa}C_2)$ in the sense of
Freedman--Quinn Section~11.9C(3).
\end{proposition}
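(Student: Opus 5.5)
The plan is to identify $L$ with the weak collar $V$ with the deleted set $F\cup Q$ added back, and then to move the $R$-acyclicity of $(V,N)$ across that addition. First the point-set setup. Since $V$ is a closed cofinal neighborhood of the unique end of $W$ and is disjoint from $P_0$, the set $K_c=\overline{W-V}^{\,W}$ is a compact codimension-zero submanifold of $A-(F\cup Q)$. Its boundary is $P_0\sqcup N$. The ``bicollar adjustment'' means choosing $N$ bicollared in $W$; the weak end theorem provides a manifold boundary, and we can push slightly into $V$ if needed. Then $A$ splits along $N$, and
\[
 L=\overline{A-K_c}^{\,A}=V\cup F\cup Q
\]
is a compact $4$-manifold with $\partial L=N\sqcup Q$. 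The compactum $F$ lies in the interior of $L$ apart from $C=F\cap Q$, so it creates no singularity. Give $L$ the marking restricted from $\lambda:\pi_1A\cong C_2$. On $Q$ this restricts to $\kappa$ by Proposition~\ref{Prop: compact piece cocores}. On $N$ it agrees with the weak-collar marking, because $V\subset W\to A$ is marked by Proposition~\ref{Prop: compact quotient}.

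\textbf{Step 1: $H_*(L,N;R)=0$.} We have $L-(F\cup Q)=V$, and by \eqref{Eq: weak collar V} $H_*(V,N;R)=0$. It therefore suffices to show that $V\hookrightarrow L$ is an $R$-homology equivalence. Factor it as
\[
 V\subset L-F\subset L.
\]
The first inclusion is a homotopy equivalence, since $Q-C$ is a collared boundary piece of $L-F$ and pushing that collar inward does the job. For the second inclusion, pass to the reference double cover. The cover splits over $F$ because $\check H^1(F;\mathbb Z/2)=0$. Strong excision and relative Alexander--Lefschetz duality then give
\[
 H_k(L,L-F;R)\cong \check H^{4-k}(\widetilde F,\widetilde C;\mathbb Z)=0,
\]
exactly as in \eqref{Eq: relative duality deletion}, using that $F$ is cell-like and $C$ is connected and \v{C}ech-acyclic (Proposition~\ref{Prop: relative cap}(a)). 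Comparing the long exact sequences of the triples $(L,L-F,N)$ and $(L-F,V,N)$ then gives $H_*(L,N;R)=0$.

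\textbf{Step 2: $H_*(L,Q;R)=0$.} This is the step I expect to be the main obstacle, since it is where duality must be handled honestly in the cover. The reference cover $\widetilde L$ is compact and oriented, and the involution preserves orientation because $\widetilde A$ is oriented. Poincaré--Lefschetz duality in $\widetilde L$, with $\partial\widetilde L=\widetilde N\sqcup\widetilde Q$, gives
\[
 H_k(\widetilde L,\widetilde Q;\mathbb Z)\cong H^{4-k}(\widetilde L,\widetilde N;\mathbb Z).
\]
Step 1 says that the finitely generated free chain complex $C_*(\widetilde L,\widetilde N)$ is acyclic. It is therefore chain contractible, so its cohomology also vanishes. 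Hence $H_*(L,Q;R)=0$.

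The two vanishing statements, together with the compatible markings, are precisely the definition of an $R$-homology $h$-cobordism of the data $(N\to C_2)$ and $(Q\xrightarrow{\kappa}C_2)$ in the sense of Freedman--Quinn Section~11.9C(3). The only delicate bookkeeping beyond Step 2 is to check that $L-(F\cup Q)$ coincides with $V$ as a set after the bicollar adjustment, so that the duality computation is applied to the correct pair.
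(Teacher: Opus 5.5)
Your proposal is correct and follows the paper's proof. It uses the same splitting $A=K_c\cup_N L$ with $L=V\cup F\cup Q$, transfers $H_*(V,N;R)=0$ to $H_*(L,N;R)=0$, and then gets $H_*(L,Q;R)=0$ from universal coefficients and complementary-boundary duality in the oriented reference cover. The one real difference is in Step 1: you reprove $H_*(L,V;R)=0$ locally from the Alexander--Lefschetz computation and a collar push, whereas the paper quotes the marked homotopy equivalence $W\hookrightarrow A$ of Proposition~\ref{Prop: compact quotient} and excises $K_c$; also, the singular chain complex is not finitely generated, but an acyclic free $\mathbb Z$-complex is contractible anyway, so Step 2 still holds.
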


\begin{proof}
The compact set $K_c$ lies in $W$, is disjoint from $F\cup Q$, and contains
$P_0$.  After moving its frontier through a bicollar, $K_c$ is a compact
codimension-zero submanifold.  Hence
\[
 W=K_c\cup_NV,
 \qquad
 A=K_c\cup_NL.
\]
Although $L$ contains the wild compactum $F$, it is the ambient
complementary closure of an ordinary codimension-zero submanifold of $A$.
It is therefore an ordinary compact $4$-manifold, and
\eqref{Eq: L boundary} follows.

The marked homotopy equivalence $W\hookrightarrow A$ in
\eqref{Eq: W to A} is the identity on $K_c$.  The long exact sequences of
pairs give
\[
             H_*(W,K_c;R)\xrightarrow{\cong}H_*(A,K_c;R).
\]
Bicollared excision and \eqref{Eq: weak collar V} now give
\begin{equation}\label{Eq: L N calculation}
\begin{aligned}
 H_*(A,K_c;R)
  &\cong H_*(W,K_c;R)\\
  &\cong H_*(V,N;R)=0,\\
 H_*(L,N;R)&\cong H_*(A,K_c;R)=0.
\end{aligned}
\end{equation}

The $C_2$-marking $\pi_1L\to C_2$ is onto because its restriction
to $Q$ is $\kappa$.  Let
$(\widehat L;\widehat N,\widehat Q)$ be the connected reference double
cover.  It is oriented because it is the restriction of the oriented
reference cover $\widetilde A$.  Equation \eqref{Eq: L N calculation} says
\[
                  H_*(\widehat L,\widehat N;\mathbb Z)=0.
\]
The integral universal-coefficient theorem and complementary-boundary
Poincar\'e--Lefschetz duality in this oriented cover give
\[
 H_j(\widehat L,\widehat Q;\mathbb Z)
 \cong H^{4-j}(\widehat L,\widehat N;\mathbb Z)=0.
\]
Returning to the $R$-homology notation gives $H_*(L,Q;R)=0$ and
completes the proof.
\end{proof}

Since $\Sigma$ is an integral homology sphere, $K=\pi_1\Sigma$ is perfect.
Thus $\kappa$ is an epimorphism with perfect kernel, exactly the hypothesis of \cite[Section~11.9C(1)]{FQ90}.  Let $V_Q$ be the standard
weak collar which realizes the data $(Q,\kappa)$.

\begin{proposition}\label{Prop: end germ identification}
Let $E_{KS}$ denote either one-sided Kwasik--Schultz end beginning at $Q$.
Then
\begin{equation}\label{Eq: two germ identifications}
                  \germ W\cong\germ V_Q\cong\germ E_{KS}.
\end{equation}
\end{proposition}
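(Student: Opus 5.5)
We establish the two homeomorphisms in \eqref{Eq: two germ identifications} separately. Both come from the Freedman--Quinn classification of weak collars with good fundamental group: germs are determined by weak-collar data up to $R$-homology $h$-cobordism \cite[Section~11.9C]{FQ90}. The middle object $V_Q$ is the standard weak collar realizing $(Q,\kappa)$. So it suffices to show that the data of $W$ and the data of $E_{KS}$ are each $R$-homology $h$-cobordant to $(Q,\kappa)$.

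\emph{First identification.} By \eqref{Eq: weak collar V}, the end of $W$ has the weak collar $V$, with data $(N,\pi_1N\twoheadrightarrow C_2)$ and perfect kernel. Proposition \ref{Prop: complementary cobordism} supplies the compact cobordism $L$ from $N$ to $Q$ with $H_*(L,N;R)=H_*(L,Q;R)=0$. The marking on $L$ restricts to the marking of $N$ and to $\kappa$ on $Q$. Hence $L$ is an $R$-homology $h$-cobordism of data in the sense of \cite[Section~11.9C(3)]{FQ90}, and the uniqueness half of that classification gives $\germ V\cong\germ V_Q$. Since $V$ is a cofinal neighborhood of the end of $W$, this yields $\germ W\cong\germ V_Q$. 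One point needs checking: the classification really concerns germs of weak collars with $\pi_1=C_2$ identified through the markings. The markings here are the fixed reference homomorphisms, and these are compatible by construction.

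\emph{Second identification.} By Remark \ref{Rem: KS concatenation}, $E_{KS}$ is the one-sided concatenation $\overline T\cup_Q\overline T\cup_Q\cdots$ beginning at $Q$. I would show this is itself a weak collar whose data are $(Q,\kappa)$. The plan has three parts.
\begin{enumerate}[(i)]
\item Show $\pi_1\overline T\cong C_2$ compatibly with $\kappa$ on both boundary copies. The cover $T(V,\alpha)$ is obtained by equivariant surgery on $\Sigma\times I$ rel boundary over $S(V)\times I$. Kwasik--Schultz arrange it to be simply connected, with inclusions of each boundary $\Sigma$ being $\mathbb Z$-homology equivalences; these are the normal-map properties in \cite[p.~448]{KS88}.
\item Show $H_*(\overline T,Q;R)=0$ for each boundary component. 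In the cover this is $H_*(T,\Sigma;\mathbb Z)=0$, which holds because $T$ is an integral homology $h$-cobordism. The construction gives the surgery kernel vanishing over $\mathbb Z$, while the nonvanishing obstruction lives only in $\pi_1$/simple data.
\item Deduce that the end is tame with stationary pro-$\pi_1$ equal to $C_2$. Then, by the characterization preceding \cite[Section~11.9B]{FQ90}, $E_{KS}$ is a weak collar: the proper map $h$ into itself is built by shifting blocks, using that each block is an $R$-homology $h$-cobordism.
\end{enumerate}
Once these hold, the data of $E_{KS}$ are literally $(Q,\kappa)$, so $E_{KS}$ and $V_Q$ are two weak collars with the same data. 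The classification (identity cobordism $Q\times I$) gives $\germ E_{KS}\cong\germ V_Q$. Both one-sided ends are handled identically, since each begins at a copy of $Q$.

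\emph{Main obstacle.} I expect the hardest step to be verifying (i)--(ii) from what \cite{KS88} actually provides, in particular that $T(V,\alpha)$ is simply connected with both boundary inclusions $\mathbb Z$-homology equivalences. If this is not explicit, my fallback is to argue from the fact that $S^4(V,\alpha)$ is the endpoint compactification of $T_\infty$. Each fixed point then has arbitrarily small invariant neighborhoods whose punctured versions are unions of blocks. Simple connectivity of $S^4$ near a point, together with a Mayer--Vietoris computation in the homology $4$-sphere, forces $H_*(T,\Sigma)=0$ and $\pi_1$ of the block cover to be trivial modulo the $\pi_1\Sigma$ that the next block kills, which is enough for stationarity and tameness.
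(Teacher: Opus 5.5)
Your proposal is correct. The first identification is essentially the paper's argument: the cobordism $L$ of Proposition~\ref{Prop: complementary cobordism} is fed into \cite[Section~11.9C(3)]{FQ90} together with $V$ and $V_Q$.

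The second identification takes a genuinely different route. The paper does not show that the whole one-sided stack is a weak collar. It first obtains $H_*(E_{KS},Q;R)=0$ from the blocks. It then uses tameness, stability of the end group and the vanishing Wall obstruction, all read off from \cite{KS88}, to invoke the weak end theorem and get \emph{some} weak-collar tail $V'\subset E_{KS}$. Finally it uses the triple $Q\subset J\subset E_{KS}$ and complementary-boundary duality to see that $J=\overline{E_{KS}-V'}$ is an $R$-homology $h$-cobordism from $(Q,\kappa)$ to the data of $V'$.

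You instead show that $E_{KS}$ is itself a weak collar whose data are exactly $(Q,\kappa)$, so only the product cobordism $Q\times I$ is needed. This avoids the end theorem, the finiteness obstruction and the duality step. It also gives the stronger statement that the entire stack is already a weak collar. The cost is that you must use the $\pi_1$-structure of the blocks explicitly and actually build the proper self-homotopy.

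Your phrase ``shifting blocks'' should be replaced by the actual mechanism. Let $E^{(k)}$ denote the tail beginning at the $k$th copy of $Q$. The inclusion $E^{(k+1)}\hookrightarrow E^{(k)}$ is a $\pi_1$-isomorphism onto $C_2$, by van Kampen, since both boundary inclusions of $\overline T$ induce $\kappa$. It is also an $R$-homology isomorphism, by excision and your step (ii). Hence it is a homotopy equivalence by Whitehead's theorem in the double cover. Since the pair has the homotopy extension property, $E^{(k+1)}$ is then a strong deformation retract of $E^{(k)}$. Concatenating these deformations over the intervals $[1/(k+2),1/(k+1)]$ gives $h:E_{KS}\times(0,1]\to E_{KS}$ with $h(\cdot,1)=\operatorname{Id}$. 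This map is proper because $h_t$ lands in $E^{(k)}$ for $t\le 1/(k+1)$, while points of $E^{(k+1)}$ are fixed for $t\ge 1/(k+1)$.

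Note that tameness plus a stationary pro-$\pi_1$ would not by themselves make a \emph{given} neighborhood a weak collar. It is the block-by-block $R$-homology $h$-cobordism property that does it, so step (iii) should rest on that and not on the tameness remark.

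Finally, (i) and (ii) follow at once from the fact the paper also uses: $T(V,\alpha)$ is the domain of an equivariant homotopy equivalence to $S^3\times I$ whose boundary restrictions are degree-one maps $\Sigma\to S^3$. Your fallback through $S^4$ and the aside about where the obstruction lives are therefore unnecessary.
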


\begin{proof}
Apply \cite[Section~11.9C(3)]{FQ90} to the weak collars $V$ and $V_Q$
and to the $R$-homology $h$-cobordism $L$.  That theorem says
that $L$ is $R$-homology $h$-cobordant rel boundary to a cobordism induced
by a homeomorphism of neighborhoods of ends.  Consequently
\begin{equation}\label{Eq: W standard germ}
                         \germ W\cong\germ V_Q.
\end{equation}
Notice that this conclusion transports only the end germ.  It does not
carry the cocore disks in $A$ into a periodic Kwasik--Schultz block.

We next consider the actual Kwasik--Schultz end.  In the proof of Kwasik--Schultz
Theorem~2.1, the periodic block $T(V,\alpha)$ is the domain of an
equivariant homotopy equivalence to $S^3\times I$, and its boundary
restrictions are the equivariant degree-one maps from $\Sigma$ to $S^3$
\cite[p.~448]{KS88}.  Hence both boundary inclusions upstairs induce
integral homology isomorphisms, and each quotient block is an
$R$-homology cobordism.  The periodic decomposition is also described in
\cite[Cor.~3.4, pp.~450--451]{KS88}.  Singular homology commutes with the
directed union of the finite stacks, and therefore
\begin{equation}\label{Eq: actual KS relative homology}
                         H_*(E_{KS},Q;R)=0.
\end{equation}
The infinite-stack construction on \cite[p.~448]{KS88} gives the proper
finite-type behavior required for tameness, and Kwasik and Schultz state
on \cite[p.~449]{KS88} that the end groups are stable and finite.  In the
present case the stable $C_2$-marked group is $C_2$.  The Wall end obstruction
is zero by the vanishing projective obstruction in
\cite[Thm.~2.1(2), p.~447]{KS88}.  Therefore the weak end theorem
\cite[Section~11.9B]{FQ90} supplies a weak-collar tail
$V'\subset E_{KS}$ with bicollared frontier.  Put
\[
 N'=\partial V',
 \qquad
 J=\overline{E_{KS}-V'}^{\,E_{KS}}.
\]
Excision gives
\[
       H_*(E_{KS},J;R)\cong H_*(V',N';R)=0.
\]
The triple $Q\subset J\subset E_{KS}$, together with
\eqref{Eq: actual KS relative homology}, gives
\[
                         H_*(J,Q;R)=0.
\]
Complementary-boundary duality in the connected reference double cover
then gives
\[
                         H_*(J,N';R)=0.
\]
The inherited map $\pi_1J\to C_2$ is onto because its restriction to $Q$
is $\kappa$, and its restrictions to $Q$ and $N'$ are the two weak-collar
data maps.  Thus $J$ satisfies all the hypotheses of \cite[Section~11.9C(3)]{FQ90}, and the germ of $V'$ is
homeomorphic to the germ of $V_Q$.  Together with
\eqref{Eq: W standard germ}, this proves
\eqref{Eq: two germ identifications}.
\end{proof}

We use the following relative form of the $\mathcal{Z}$-set push in the
final gluing.

\begin{lemma}\label{Lemma: relative Z push}
Let $Z$ be a $\mathcal{Z}$-set in a compact metric ANR $Y$, and let $D$ be
a closed set disjoint from $Z$.  Given a neighborhood $U$ of $Z$ whose
closure is disjoint from $D$, there are a neighborhood $U_1$ of $Z$ with
$\overline U_1\subset U$ and a homotopy which starts at the identity,
instantly pushes $Y$ off $Z$, is fixed on $Y-U_1$, and carries
$U_1\times[0,1]$ into $U$.  In particular, it is fixed on a neighborhood
of $D$.
\end{lemma}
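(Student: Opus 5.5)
The plan is to start from an instantaneous $\mathcal{Z}$-set homotopy and damp it by a cutoff supported near $Z$. Since $Z$ is a $\mathcal{Z}$-set in the compact metric ANR $Y$, the equivalence of definitions recalled in the introduction \cite[Sec.~2]{AG99} gives, for every $\varepsilon>0$, a homotopy
\[
 \Phi:Y\times[0,1]\to Y,\qquad \Phi_0=\operatorname{Id},\qquad \Phi_t(Y)\cap Z=\emptyset\ (t>0),
\]
whose tracks all have diameter less than $\varepsilon$. We use it as follows.

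First, fix the neighborhoods. Put $\delta=d(Z,Y-U)>0$; this is positive by compactness of $Z$, assuming $U\ne Y$ (otherwise there is nothing to separate, and we simply shrink $U$ first). Let $U_1=\{y:d(y,Z)<\delta/3\}$, so that $\overline U_1\subset U$. Choose a second open set $U_2=\{y:d(y,Z)<\delta/6\}$ with $Z\subset U_2\subset\overline U_2\subset U_1$. Take $\Phi$ with $\varepsilon<\delta/3$.

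Next, damp the homotopy. Choose an Urysohn function $\tau:Y\to[0,1]$ with $\tau\equiv1$ on $\overline U_2$ and $\tau\equiv0$ on $Y-U_1$, and define
\[
 G(y,t)=\Phi\bigl(y,\tau(y)t\bigr).
\]
We then check the required properties one by one.
\begin{enumerate}[(i)]
\item $G_0=\operatorname{Id}$.
\item $G$ is fixed on $Y-U_1$, because $\tau$ vanishes there.
\item $G$ carries $U_1\times[0,1]$ into $U$: for $y\in U_1$, every point of the track $\Phi(y,[0,1])$ lies within $\varepsilon$ of $y$, hence within $\delta/3+\delta/3<\delta$ of $Z$.
\item $G_t$ misses $Z$ for $t>0$. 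For $y\in\overline U_2$ we have $\tau(y)t=t>0$, so $G_t(y)=\Phi_t(y)\notin Z$. For $y\notin\overline U_2$, either $\tau(y)=0$, and then $G_t(y)=y$ lies outside $U_2\supset Z$, or $\tau(y)>0$, and then $\Phi_{\tau(y)t}(y)\notin Z$.
\end{enumerate}
In every case $G_t(Y)\cap Z=\emptyset$ whenever $t>0$.

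Finally, since $\overline U\cap D=\emptyset$, the open set $Y-\overline U_1$ contains $D$, and $G$ is fixed on it. So $G$ is fixed on a neighborhood of $D$.

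The only step needing care is the existence of a small-track instantaneous push-off. I would cite it from \cite{AG99}, or derive it directly from the open-set definition together with local contractibility of the compact ANR $Y$. Everything else is an Urysohn cutoff argument.
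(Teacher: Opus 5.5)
Your proof is correct and is essentially the paper's argument: take an instantaneous push-off $\Phi$ with controlled tracks and damp it by an Urysohn cutoff, $G(y,t)=\Phi(y,\tau(y)t)$, exactly as the paper does with $\overline H(x,t)=H(x,t\lambda(x))$. The only difference is cosmetic. You control the tracks metrically (diameter $<\delta/3$), whereas the paper invokes Henderson's open-cover characterization with a refinement of $\{U,Y-\overline U_1\}$; your small-track push-off also follows at once from any instantaneous homotopy, by uniform continuity on the compact $Y\times[0,1]$ and the reparametrization $t\mapsto t_0t$.
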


\begin{proof}
Choose open sets $U_0$ and $U_1$ such that
\[
 Z\subset U_0\subset\overline U_0\subset U_1
 \subset\overline U_1\subset U.
\]
The small-homotopy characterization of $\mathcal{Z}$-sets
\cite[Thm.~I.1, pp.~206--208]{Hen75}, applied to an open refinement of
$\{U,Y-\overline U_1\}$, gives a homotopy
\begin{equation}\label{Eq: instant Z homotopy}
 H:Y\times[0,1]\longrightarrow Y,
 \qquad H_0=\operatorname{Id}_Y,
 \qquad H_t(Y)\subseteq Y-Z\quad(t>0),
\end{equation}
whose tracks beginning in $U_1$ remain in $U$.  Since $Y$ is metric,
Urysohn's lemma supplies a continuous function $\lambda:Y\to[0,1]$ which
is one on $\overline U_0$ and zero on $Y-U_1$.  Define
\begin{equation}\label{Eq: cutoff Z homotopy}
                         \overline H(x,t)=H(x,t\lambda(x)).
\end{equation}
If $\lambda(x)=0$, then $\overline H(x,t)=x\notin Z$ unless $x\in Z$;
but $\lambda=1$ on $Z$.  If $\lambda(x)>0$ and $t>0$, then
\eqref{Eq: instant Z homotopy} also shows that $\overline H(x,t)\notin Z$.
Thus \eqref{Eq: cutoff Z homotopy} instantly misses $Z$ and is fixed on
$Y-U_1$.  Its tracks beginning in $U_1$ lie in $U$, and
$Y-\overline U$ is a fixed open neighborhood of $D$.
\end{proof}

\begin{lemma}\label{Lemma: compact core grafting}
Suppose $V_X$ and $V_W$ are homeomorphic bicollared cofinal tails of
manifold ends, where $V_W\subset W$.  Then the
$\mathcal{Z}$-compactification in Proposition \ref{Prop: compact quotient}
may be transported to $V_X$ and attached to the untouched compact core.
The resulting compactification is a compact ANR of covering dimension at
most four and has $\mathcal{Z}$-boundary $Z$.
\end{lemma}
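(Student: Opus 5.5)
Let $h:V_X\to V_W$ be the given homeomorphism, and let $X$ denote the manifold whose end has tail $V_X$, with compact core $K_X=\overline{X-V_X}$. The plan is to replace $V_W$ by a cofinal subtail that sits inside $Y$ away from everything except the end, form its closure in $Y$, and glue this closure to $K_X$ along the transported frontier.

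First, I shrink if necessary so that $V_W$ is a cofinal tail of the single end of $W$, with bicollared compact frontier $N_W=\partial V_W$. Its closure $\overline{V_W}=V_W\sqcup Z$ in $Y$ is compact. I claim it is an ANR with $Z$ a $\mathcal{Z}$-set. Because $N_W$ is bicollared in $Y-Z$ and disjoint from $Z$, $\overline{V_W}$ is a retract of a neighborhood in $Y$: push the outer side of the bicollar onto $N_W$ and take the identity elsewhere. Closed neighborhood retracts of ANRs are ANRs, so $\overline{V_W}$ is an ANR. Its covering dimension is at most four by monotonicity, since it is a closed subset of $Y$.

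To see that $Z$ is a $\mathcal{Z}$-set in $\overline{V_W}$, I apply Lemma \ref{Lemma: relative Z push} with $D=N_W\cup(Y-V_W)$ and $U$ a neighborhood of $Z$ whose closure misses $D$. The resulting homotopy instantly pushes off $Z$, is fixed near $D$, and keeps tracks starting in $U_1$ inside $U\subset\overline{V_W}$. It therefore restricts to a homotopy of $\overline{V_W}$ into itself that instantly misses $Z$. Small-homotopy versions follow by choosing $U$ small, so Henderson's characterization gives $Z\in\mathcal{Z}(\overline{V_W})$.

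Next I define $\widehat X=K_X\cup_{N_X}\overline{V_X}$, where $\overline{V_X}=V_X\sqcup Z$ carries the topology transported by $h$. This is a union of two compact ANRs, $K_X$ and $\overline{V_X}\cong\overline{V_W}$, along the compact ANR $N_X\cong N_W$, a $3$-manifold. It is therefore a compact metric ANR by the standard union theorem for ANRs (Borsuk). The countable/finite closed-sum theorem \cite{HW41} gives $\dim\widehat X\le 4$, since $K_X$ is a $4$-manifold.

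Finally, $Z$ is a $\mathcal{Z}$-set in $\widehat X$. I repeat the argument with Lemma \ref{Lemma: relative Z push} applied in $\overline{V_X}$, with $D$ a collar of $N_X$. The push is fixed near $N_X$, so it extends by the identity over $K_X$ and gives instant push-offs of $\widehat X$ from $Z$ with arbitrarily small tracks. I expect the main obstacle to be the bookkeeping in this step. Lemma \ref{Lemma: relative Z push} is stated for $Z\subset Y$, so I must make sure that the restricted homotopy really stays in $\overline{V_W}$. This is why the closure of $U$ is required to miss $Y-V_W$, and it is where bicollaredness of the frontier is used.
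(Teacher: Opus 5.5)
Your proposal is correct and follows the paper's proof essentially step for step: the bicollar retraction showing that $\overline{V_W}$ is a compact ANR, the restriction of the relative $\mathcal{Z}$-push from Lemma~\ref{Lemma: relative Z push}, its extension by the identity over the compact core, and the closed-sum theorem; the only variation is that you glue with Borsuk's union theorem where the paper invokes Hanner's adjunction theorem, and either works here. One slip to fix: as written, $D=N_W\cup(Y-V_W)$ contains $Z$ (because $V_W\subset W$), so no neighborhood of $Z$ can have closure disjoint from it; you mean $D=N_W\cup(W-V_W)=K_W$, i.e.\ $\overline U\subset\operatorname{Int}_Y\overline{V_W}$, which is exactly the paper's choice.
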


\begin{proof}
Let $K_W$ be the compact complement of $V_W$ in $W$.  The closure of the
chosen tail in $Y$ is
\[
                  \overline V_W^{\,Y}
                       =Y-\operatorname{Int}K_W.
\]
A bicollar of $\partial K_W$ makes this closure a neighborhood retract of
an open subset of the ANR $Y$.  It is therefore a compact ANR.  Choose a
neighborhood $U$ of $Z$ whose closure lies in the interior of
$\overline V_W^{\,Y}$ and misses the frontier collar.  Lemma
\ref{Lemma: relative Z push} gives a controlled $\mathcal{Z}$-push: points
in its support have their tracks in $U$, and all other points are fixed.
It therefore restricts to a self-homotopy of
$\overline V_W^{\,Y}$ which instantly misses $Z$ and is fixed on the
frontier collar.

Transport $\overline V_W^{\,Y}$ to the corresponding compactified tail of
$V_X$, and attach the compact core of the given manifold along the
bicollared frontier.  Both pieces are compact ANRs, their common frontier
is a compact $3$-manifold, and the bicollars make its inclusions
cofibrations.  Hanner's adjunction theorem \cite[Thm.~8.2]{Han51} makes
the union a compact ANR.  The relative
$\mathcal{Z}$-push extends by the identity over the compact core, so $Z$
remains a $\mathcal{Z}$-set.  Finally, the finite closed-sum theorem
\cite{HW41} gives covering dimension at most four.
\end{proof}

\begin{proof}[Proof of Theorem \ref{Th: KS Z-compactification}]
Proposition \ref{Prop: compact quotient} gives a compact ANR
$\mathcal{Z}$-compactification of the auxiliary end $W$.  By Proposition
\ref{Prop: end germ identification}, choose bicollared cofinal tails in
$W$ and in either one-sided Kwasik--Schultz end on which the germ
homeomorphism is represented by an actual homeomorphism.  Apply Lemma
\ref{Lemma: compact core grafting}.  This produces the required
$\mathcal{Z}$-compactification at one end.

Perform the construction independently in two disjoint cofinal
neighborhoods of the two ends of $M_\alpha$, and attach the untouched
compact middle.  The resulting remainder is
\(Z_-\sqcup Z_+\).
The relative pushes on the two end compactifications extend by the identity
over the compact middle, so their disjoint union is a $\mathcal{Z}$-set.
The ANR and dimension conclusions follow from the same adjunction and
finite closed-sum arguments.  Propositions
\ref{Prop: non-ANR homology boundary} and \ref{Prop: boundary shape} give the
remaining assertions about each $\mathcal{Z}$-boundary component.  This completes the
proof.
\end{proof}

\begin{remark}[The signature class]\label{Rem: signature class}
The compact cobordism $A$ is not an $R$-homology product.  In its reference cover,
\[
 H_2(A,Q;R)
 \cong H_2(\widetilde A,\Sigma;\mathbb Z)
 \cong H_2(P^4(V,\alpha);\mathbb Z)\ne0,
\]
and this group carries the transferred signature $8$.  This
signature-bearing group lies in the compact core.  Indeed, relative
Mayer--Vietoris for
$A=K_c\cup_NL$, together with $H_*(L,Q;R)=0$, gives
\[
                         H_*(A,Q;R)\cong H_*(K_c,N;R).
\]
Thus the signature-bearing homology remains in the compact core $K_c$.
Only the outer cobordism $L$ is an $R$-homology $h$-cobordism of its
boundary data.
\end{remark}

We recall the notion of a pseudo-collar.  A manifold neighborhood
of infinity $N$ in a manifold $M$ is a \emph{homotopy collar} if the
inclusion
\(
        \operatorname{Fr}_M N\hookrightarrow N
\)
is a homotopy equivalence.  A \emph{pseudo-collar} is a homotopy collar
which contains arbitrarily small homotopy collar neighborhoods of
infinity, and $M$ is \emph{pseudo-collarable} if it contains a
pseudo-collar neighborhood of infinity.  The notion was developed by
Guilbault and Guilbault--Tinsley in their study of manifolds with
nonstable fundamental group at infinity; see
\cite{Gui00,GT03,GT06}.  A complete characterization of
pseudo-collarability for high-dimensional manifolds, including
manifolds with noncompact boundary, was obtained in \cite{Gu20}.

\begin{proof}[Proof of Corollary \ref{Cor: KS non-pseudo-collar}]
Kwasik and Schultz show that neither end of $M_\alpha$ admits a
$1$-neighborhood; see \cite[p.~449]{KS88}.  On the other hand, the
fundamental group at either end is stable and equal to $C_2$, which is
good in the sense of Freedman--Quinn.  By
\cite[Proposition~2.3]{GT06}, a $4$-dimensional pseudo-collar with good
stable fundamental group contains an open collar neighborhood of
infinity.  Such a neighborhood is, in particular, a $1$-neighborhood.
Thus neither end of $M_\alpha$ can be pseudo-collarable.  Theorem
\ref{Th: KS Z-compactification} shows that $M_\alpha$ is
$\mathcal Z$-compactifiable.
\end{proof}

\section*{Acknowledgements}
The author used OpenAI Codex with the GPT-5.6 Sol Ultra model during
the development and preparation of this manuscript.  The project
involved approximately 73 hours of Codex runs.  The author first
provided the model with a sketch and the relevant background, including the work of
Kwasik--Schultz and Freedman--Quinn, and used it as an interactive
research tool to explore possible constructions of a
$\mathcal{Z}$-compactification.  Codex assisted in developing and
refining the relative-cap construction, checking intermediate
arguments, locating and comparing relevant results in the literature,
and improving the organization and exposition of the proof.  The
resulting arguments were subsequently checked, substantially revised,
and rewritten by the author.  All mathematical statements, proofs, and
references in the final manuscript were independently verified by the
author, who takes full responsibility for the content.

The author was supported by NSFC Grant
No.~12201102.

\end{document}